\documentclass[reqno,english]{amsart}

\usepackage{amssymb,latexsym,amsmath,amsthm}
\usepackage{graphicx,color,palatino,mathpazo, amsfonts}

\usepackage[colorlinks=true]{hyperref}
\hypersetup{citecolor=blue}

\numberwithin{equation}{section}
\newtheorem{theorem}{Theorem}[section]
\newtheorem{proposition}[theorem]{Proposition}
\newtheorem{lemma}[theorem]{Lemma}

\theoremstyle{definition}

\def\XXint#1#2#3{{\setbox0=\hbox{$#1{#2#3}{\int}$}
     \vcenter{\hbox{$#2#3$}}\kern-.5\wd0}}

\def\p{\partial}

\def\R{\mathbb R}

\newcommand{\ve }{\varepsilon }

\newcommand{\be}{\begin{equation}}
\newcommand{\ee}{\end{equation}}
\newcommand{\equ}[1]{(\ref{#1})}

\long\def\hide#1{}

\begin{document}

\title[Travelling and rotating solutions for SQG]{Travelling and rotating solutions to the generalized inviscid surface quasi-geostrophic equation}

\author[W. Ao]{Weiwei Ao}
\address{Weiwei Ao
\hfill\break\indent
Wuhan University
\hfill\break\indent
Department of Mathematics and Statistics, Wuhan, 430072, PR China}
\email{wwao@whu.edu.cn}

\author[J. D\'avila]{Juan D\'avila}
\address{
Juan D\'avila \hfill \break \indent
Department of Mathematical Sciences University of Bath,
Bath BA2 7AY, United Kingdom, and
Instituto de Matem\'aticas, Universidad de Antioquia, Calle 67, No. 53--108, Medell\'\i{}n, Colombia}

\email{jddb22@math.ac.uk}

\author[M. del Pino]{Manuel del Pino}

\address{Manuel del Pino \hfill \break \indent
Department of Mathematical Sciences, University of Bath, Bath, Ba2 7AY, UK}

\email{m.delpino@bath.ac.uk}

\author[M.Musso]{Monica Musso}

\address{ Monica Musso \hfill \break \indent Department of Mathematical Sciences, University of Bath, Bath, Ba2 7AY, UK. }

\email{m.musso@bath.ac.uk}

\author[J. Wei]{Juncheng Wei}
\address{Juncheng Wei
\hfill\break\indent
University of British Columbia
\hfill\break\indent
 Department of Mathematics, Vancouver, BC V6T1Z2, Canada}
 \email{jcwei@math.ubc.ca}

\date{\today}

\begin{abstract}
For the generalized surface quasi-geostrophic equation
$$
\left\{
\begin{aligned}
& \partial_t \theta+u\cdot \nabla \theta=0,
\quad \text{in } \R^2 \times (0,T),
\\
& u=\nabla^\perp \psi,
\quad
\psi = (-\Delta)^{-s}\theta
\quad \text{in } \R^2 \times (0,T) ,
\end{aligned}
\right.
$$
$0<s<1$, we consider for $k\ge1$ the problem of finding a family of
$k$-vortex solutions $\theta_\ve(x,t)$ such that as $\ve\to 0$
$$
\theta_\ve(x,t) \rightharpoonup \sum_{j=1}^k m_j\delta(x-\xi_j(t))
$$
for suitable trajectories for the vortices $x=\xi_j(t)$. We find such solutions in the special cases of vortices travelling with constant speed along one axis or rotating with same speed around the origin. In those cases the problem is reduced to a fractional elliptic equation which is treated with singular perturbation methods.  A key element in our construction is a proof of the non-degeneracy of the radial ground state for the so-called fractional plasma problem
$$ (-\Delta)^sW = (W-1)^\gamma_+ , \quad \text{in } \R^2, \quad 1<\gamma < \frac{1+s}{1-s}$$
whose existence and uniqueness have recently been proven in \cite{chan_uniqueness_2020}.

\end{abstract}

\maketitle

\section{Introduction} 

In this paper we consider the problem
\begin{align}
\label{generalizedsqg}
\left\{
\begin{aligned}
& \partial_t \theta+u\cdot \nabla \theta=0,
\quad \text{in } \R^2 \times (0,T)
\\
& u=\nabla^\perp \psi,
\quad
\psi = (-\Delta)^{-s}\theta
\quad \text{in } \R^2 \times (0,T) ,
\end{aligned}
\right.
\end{align}
where $0<s<1$ and $(a_1,a_2)^\perp = (a_2,-a_1)$,
which is known as the modified or generalized surface quasi-geostrophic equation.
Here $\theta$ is the active scalar being transported by the velocity field $u$ generated by $\theta$ and
$\psi$ is the stream function.
The operator $(-\Delta)^{-s}$ in $\R^n$ is the standard inverse of the fractional laplacian and is given by the expression
\[
(-\Delta)^{-s}\theta(x) =
\int_{\R^n} G_s(x-y) \theta(y)\,ds ,
\quad G_s(z) = \frac{c_{n,s}}{|z|^{n-2s}}
\]
where
\begin{align}
\label{cns}
c_{n,s} = \pi^{-\frac{n}{2}} 2^{-2s} \frac{\Gamma(\frac{n-2s}{2})}{\Gamma(s)}.
\end{align}
The case $s=\frac{1}{2}$ in \eqref{generalizedsqg} corresponds to the surface quasi-geostrophic  (SQG) equation while the limit  $s\uparrow 1$ corresponds to the 2D Euler equation.
In the formulation \eqref{generalizedsqg} we assume that $\theta$ is sufficiently regular so that $\psi$ is $C^1$.




\medskip
Equations \eqref{generalizedsqg} for $s=\frac 12 $ first appeared as models of geophysical flows. After the classical work by Constantin, Majda and Tabak \cite{Constantin_1994}, who pointed out its formal mathematical analogies with the three dimensional Euler equation, these equations have been widely investigated.

\medskip
The Cauchy problem for \eqref{generalizedsqg} is a delicate matter. For $0<s<1$, local well-posedness is known for sufficiently regular initial data,
see \cite{castro_global_2017,chae-et-al-2012, Constantin_1994, kiselev_local_2017} and the references therein. Large class of initial data (patches)  may produce finite time singularities, see \cite{kiselev_finite_2016, kiselev_local_2017}.



\medskip
Of special interest are solutions of \eqref{generalizedsqg} with highly concentrated values of the active scalar $\theta(x,t)$ around a finite number of points $\xi_1(t), \ldots, \xi_k(t)$ which are idealized as regular solutions that approximate a singular object of the form
\be\label{theta}
\sum_{i=1}^k m_i \delta (x-\xi_i(t)) ,
\ee
where $\delta(x)$ is the standard Dirac mass at the origin. The constants $m_i$ are called the intensities of the vortices $\xi_i(t)$.
In the case $s=1$, corresponding to the 2D Euler equation, these solutions represent fluids with sharply concentrated vorticities around the points  $\xi_i(t)$.  In this setting the problem is classical and traces back Kirchhoff.
The location of the limiting point vortices is found by formal substitution, leading to the Hamiltonian system
\be
\dot \xi_j(t) =   \frac 1{2\pi} \sum_{i\ne j} ^k m_i\frac {(\xi_i(t)- \xi_j(t))^\perp }{|\xi_i(t)- \xi_j(t)|^2 } \quad j=1,\ldots, k .
\label{k1}\ee
Finding regular solutions that approximate
the superposition of  point vortices \equ{theta} for a given solution of system \equ{k1},
is the classical  {\em vortex desingularization problem}. See the works \cite{marchioro_euler_1983, davila_gluing_2020-1, SS2013, CLW2014, smets_desingularization_2010} and references therein.

\medskip
For the generalized SQG equation \eqref{generalizedsqg}, the point vortex model corresponding to a solution of the form \equ{theta} becomes
\be\label{aa}
\dot \xi_j(t) =
\frac{1}{2^{2s-1} \pi }
\frac{\Gamma(2-s)}{\Gamma(s)}
\sum_{i\not=j} m_i
\frac{(\xi_i(t)-\xi_j(t))^\perp}{|\xi_i(t)-\xi_j(t)|^{4-2s}} ,\quad j=1,\ldots,k ,
\ee
see the recent work by Rosenzweig \cite{rosenzweig} and references therein.

\medskip
The purpose of this paper is to construct regular solutions $\theta(x,t)$ which resemble a superposition of point vortices of the form  \equ{theta}, where the $k$-tuple
 $(\xi_1(t),\ldots,\xi_k(t))$ represents a solution of system \eqref{aa} which {\em does not change form}  as time evolves.
 More precisely, we focus on traveling and rotating solutions of system \eqref{aa}.

 \medskip
A traveling solution of \eqref{aa} is one of the form
\begin{align}
\label{xi-trav}
\xi_j(t) = b_j + c t e_2
\end{align}
where   $b_1,\ldots,b_k$ are points in $\R^2$, the constant
$c\in \R$ is the speed and without loss of generality we take the travel direction to be $e_2 =(0,1)$.
Then \eqref{aa} reduces to the system
\begin{align}
\label{reduced-trav}
c e_2 = \frac{\Gamma(2-s)}{ 2^{2s-1} \pi \Gamma(s)}
\sum_{i\not=j} m_i
\frac{(b_i-b_j)^\perp}{|b_i-b_j|^{4-2s}} ,\quad j=1,\ldots,k .
\end{align}
A rotating solution of \eqref{aa} is one of the form
\begin{align}
\label{xi-rot}
\xi_j(t) = Q_{\alpha t} b_j , \quad
Q_{\alpha t}
= \left[
\begin{matrix}
\cos(\alpha t) & -\sin(\alpha t) \\
\sin(\alpha t) & \phantom{-}\cos(\alpha t)
\end{matrix}
\right]
\end{align}
and  $b_1,\ldots,b_k \in \R^2$.
These are solutions of \eqref{aa} if
\begin{align}
\label{reduced-rot}
\alpha b_j =
- \frac{\Gamma(2-s)}{ 2^{2s-1} \pi \Gamma(s)}
\sum_{i\not=j} m_i
\frac{b_i-b_j}{|b_i-b_j|^{4-2s}} ,\quad j=1,\ldots,k .
\end{align}


For simplicity, we will concentrate on the most elementary solutions to  \eqref{reduced-trav} and \eqref{reduced-rot}.
For \eqref{reduced-trav} we consider the traveling vortex pair, namely the solution with
$k=2$, and
\begin{align}
\label{trav-param}
\begin{aligned}
b_1 &= d e_1, \quad b_2 = - d e_1, \quad e_1=(1,0), \quad m_1=-m_2=m,
\\
c & =  - \frac{\Gamma(2-s)}{4\pi  \Gamma(s)} \frac{m}{d^{3-2s}} ,
\end{aligned}
\end{align}
where $d>0$.

In the case of rotating solutions, we consider the rotating polygon with equal masses, that is, for $k\geq 2$,
\begin{align}
\label{rot-param}
\begin{aligned}
b_j &= \rho e^{2\pi i \frac{j}{k}} , \quad
m_j = m, \quad j=0,\ldots,k-1,
\\
\alpha &= \frac{m}{\rho^{2-2s}}
\frac{\Gamma(2-s)}{ 2^{s+1}\pi \Gamma(s)}
\sum_{l=1}^{k-1} \frac{1}{(1-\cos(\frac{2\pi l}{k}))^{1-s}},
\end{aligned}
\end{align}
where $\rho>0$.

\subsection{Main results}

In analogy with the solution \eqref{xi-trav}  of \eqref{aa}, we look for traveling solutions to \eqref{generalizedsqg}
by requiring that
\begin{equation}
\label{travel-theta}
\theta(x_1,x_2, t)=\Theta(x_1,x_2-ct) , 
\end{equation}
for some profile function $\Theta(x_1,x_2)$ defined on $\R^2$.
In this case, the generalized SQG equation \eqref{generalizedsqg} can be rewritten as the stationary problem
\begin{equation}\label{orthogonal}
(\nabla^\perp \Psi-ce_2) \cdot  \nabla \Theta =0	,
\quad \Psi = (-\Delta)^{-s} \Theta .
\end{equation}
The condition that $\theta$ approximates \eqref{theta} now becomes
\begin{align}
\label{Theta-conc}
\Theta(x)\approx \sum_{j=1}^k m_j \delta(x-b_j).
\end{align}

\medskip
Similarly, associated to solutions \eqref{xi-rot} of system \equ{aa}, we look for rotating solutions $\theta(x)$ of \eqref{generalizedsqg} close to \eqref{theta}, by requiring that
\begin{align}
\label{rot-theta}
\theta(x,t) = \Theta( Q_{-\alpha t} x) , \quad x\in \R^2 ,
\end{align}
with $\Theta(x)$ also having the concentration behavior \eqref{Theta-conc}.
Then  \eqref{generalizedsqg} becomes
\begin{align}
\label{elliptic-rot}
(\nabla^\perp \Psi + \alpha x^\perp )\cdot \nabla \Theta = 0 , \quad \Psi = (-\Delta)^{-s} \Theta.
\end{align}


Our first result states the existence of a traveling solution concentrated near the vortex pair associated to the solution \eqref{xi-trav}, \eqref{trav-param} of system \eqref{reduced-trav}.

\begin{theorem}
\label{thm-vortex-pair}
Consider the traveling vortex pair given by \eqref{trav-param}. Then for $\varepsilon>0$ small there is a solution $\theta_\varepsilon$ of \eqref{generalizedsqg} of the form \eqref{travel-theta} such that $\Theta_\varepsilon$ is $C^1(\R^2)$, and
\begin{align*}
& \Theta_\varepsilon (x) \rightharpoonup
m \delta(x-b_1)
-m \delta(x-b_2)
\quad \text{as } \varepsilon\to 0,
\quad
\mathop{\mathrm{supp}} \Theta_\varepsilon \subset \bigcup_{j=1}^2 B_{C \varepsilon}(b_j),
\end{align*}
where the convergence is in the sense of measures and $C>0$ is a constant.
\end{theorem}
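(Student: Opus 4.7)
The plan is to recast the travelling-wave problem as a scalar nonlocal elliptic equation for the stream function and then perform a Lyapunov-Schmidt construction around a superposition of two rescaled copies of the fractional plasma ground state $W$. Since $c e_2 = \n^\perp(-c x_1)$, the steady equation \equ{orthogonal} reads $\n^\perp(\Psi + c x_1)\cdot \n \Theta = 0$, so $\Theta$ and $\Phi := \Psi + c x_1$ must share level sets. It is natural to prescribe $\Theta = G(\Phi)$ with the non-decreasing, sign-changing plasma-type profile
\[
G(t) = (t-\mu_1)_+^\gamma - (-t-\mu_2)_+^\gamma,
\qquad 1 < \gamma < \tfrac{1+s}{1-s},
\]
where the Lagrange multipliers $\mu_1, \mu_2$ are adjusted to enforce the two mass constraints $\int_{\R^2}(\Phi-\mu_1)_+^\gamma\,dx = m$ and $\int_{\R^2}(-\Phi-\mu_2)_+^\gamma\,dx = m$. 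The full problem then reduces to the single semilinear fractional equation
\be\label{semilin-plan}
(-\Delta)^s \Psi = (\Psi + c x_1 - \mu_1)_+^\gamma - (-\Psi - c x_1 - \mu_2)_+^\gamma \quad \text{in } \R^2 .
\ee

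Using the radial ground state $W$ of $(-\Delta)^s W = (W-1)_+^\gamma$ cited in the abstract, define the rescaled bump $W_\ve(y) = \ve^{-2s/(\gamma-1)} W(y/\ve)$, which is supported in $B_{C\ve}(0)$ and solves $(-\Delta)^s W_\ve = (W_\ve - \ve^{-2s/(\gamma-1)})_+^\gamma$. For centres $\xi_1 \approx b_1$ and $\xi_2 \approx b_2$ take the first approximation $\Psi_0(x) = W_\ve(x-\xi_1) - W_\ve(x-\xi_2)$. Near $\xi_j$ the ``outer'' contribution coming from the opposite vortex, together with the travel term $c x_1$, is smooth on the inner scale, and a Taylor expansion around $\xi_j$ shows that its value at $\xi_j$ can be absorbed into $\mu_j$ while its gradient is absorbed into an adjustment of the centre; this leaves an error in \equ{semilin-plan} of high order in $\ve$ in suitable weighted norms.

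The heart of the construction is to linearize around each bump, obtaining after rescaling to the inner variable the operator $L\phi = (-\Delta)^s \phi - \gamma (W-1)_+^{\gamma-1}\phi$ on $\R^2$. The non-degeneracy theorem stated in the abstract gives that the bounded kernel of $L$ equals $\mathrm{span}\{\p_{y_1} W, \p_{y_2} W\}$; together with the compactly supported weight $(W-1)_+^{\gamma-1}$ this yields invertibility of $L$ on the $L^2$-orthogonal complement of the two translation modes, with bounds independent of $\ve$. A contraction-mapping argument in a weighted norm that controls both the inner scale and the slow interaction between the two vortices then solves the projected nonlinear problem for a correction $\phi = \phi(\xi_1,\xi_2,\mu_1,\mu_2)$ of size $o(1)$ as $\ve \to 0$.

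The construction closes through a $4$-dimensional reduced problem obtained by testing the full equation against the four translation modes attached to the two vortices, together with the two mass constraints fixing the $\mu_j$. At leading order in $\ve$ this reduction reproduces exactly the point-vortex system \equ{reduced-trav}, whose symmetric solution is \equ{trav-param}; this configuration is a non-degenerate critical point modulo $x_2$-translation, so the implicit function theorem produces genuine centres $\xi_{j,\ve}\to b_j$ and hence the $\Theta_\ve$ of the statement, with support in $\bigcup_j B_{C\ve}(b_j)$ and weak-$*$ limit $m\d(x-b_1) - m\d(x-b_2)$. The principal obstacle is the uniform invertibility of $L$: the plasma nonlinearity is only $C^{\gamma-1}$ across the free boundary $\{W=1\}$ and $L$ is nonlocal, so the cited non-degeneracy must be combined with careful weighted estimates for $(-\Delta)^{-s}$ to absorb the loss of regularity. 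A secondary technical issue, which is actually cleaner than in the Euler case $s=1$ thanks to the polynomial rather than logarithmic decay of $G_s$, is the asymptotic expansion of the interaction integral $\int G_s(x-y) W_\ve(y-\xi_j)\,dy$ at the other vortex to the order needed to extract the Kirchhoff-type reduction.
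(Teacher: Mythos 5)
Your overall strategy---recasting \eqref{orthogonal} as a fractional semilinear equation, a two-bump ansatz built from the plasma ground state $W$, Lyapunov--Schmidt based on the nondegeneracy of the linearized plasma operator, and a reduced equation recovering \eqref{reduced-trav}---is the same as the paper's, but two steps would fail as written. First, the global equation $(-\Delta)^s\Psi=(\Psi+cx_1-\mu_1)_+^\gamma-(-\Psi-cx_1-\mu_2)_+^\gamma$ on all of $\R^2$ is not usable: since $\Psi\to0$ while $cx_1\to\pm\infty$, the right-hand side grows like $|cx_1|^\gamma$ at infinity, so the positive parts are not compactly supported and no decaying solution exists. The relation $\Theta=f(\Psi+cx_1)$ can only be imposed \emph{locally}; the paper inserts cut-offs $\chi_{B_\delta(b_j)}$ into the nonlinearity (see \eqref{eqtrans}, \eqref{eq00}) and verifies a posteriori that $\mathop{\mathrm{supp}}\Theta_\varepsilon\subset\bigcup_jB_{C\varepsilon}(b_j)$, which is what makes the truncated problem equivalent to \eqref{orthogonal}. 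Second, your normalization cannot deliver bumps of prescribed mass $m$ concentrating at scale $\varepsilon$. For the unscaled equation $(-\Delta)^su=(u-\lambda)_+^\gamma$ the solutions form the one-parameter family $\lambda^{\frac{2s}{\gamma-1}}(W(\lambda^{\frac{\gamma-1}{2s}}x)-1)+1$ shifted by $\lambda$, so the threshold simultaneously fixes the concentration scale \emph{and} the mass (your $W_\varepsilon$ carries mass $M_\gamma\varepsilon^{2-\frac{2s\gamma}{\gamma-1}}\to0$ or $\infty$); adjusting $\mu_1,\mu_2$ cannot decouple them. The paper resolves this by putting the amplitude $\varepsilon^{(2-2s)\gamma-2}$ in front of the nonlinearity and introducing the dilation parameter $\mu$ fixed by \eqref{masses}, while $\lambda$ is determined not by a mass constraint but by the consistency condition \eqref{lambda2} that makes $\psi_0$ a good approximation.

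Two further points. Your closing step (a four-dimensional reduction plus the implicit function theorem at a critical point that is only non-degenerate modulo $x_2$-translations) does not apply directly, precisely because of that degeneracy; the paper instead works in the symmetry class \eqref{symmetry2} (odd in $x_1$, even in $x_2$), which collapses the cokernel to the single function $Z=Z_1+Z_2$ and reduces matters to the scalar equation $d^{-(3-2s)}-c_1+g(d)=0$ in the one remaining parameter $d$, solved by continuity. Finally, the nondegeneracy result classifies the \emph{decaying} kernel of $L_0$, not the bounded kernel: the dilation mode $z_0$ is bounded, lies in the kernel, and is not a translation derivative, so your invertibility claim must be formulated in spaces encoding decay (the paper's $\|\cdot\|_*$-norm), not mere boundedness.
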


Similarly, we obtain rotating concentrated solutions near the vertices of the rotating polygon solution \eqref{xi-rot}, \eqref{rot-param} of \eqref{reduced-rot}.
\begin{theorem}
\label{thm-rot}
Consider the rotating polygon given by  \eqref{rot-param}. Then for $\varepsilon>0$ small there is a solution $\theta_\varepsilon$ of \eqref{generalizedsqg} of the form \eqref{rot-theta} such that
 $\Theta_\varepsilon$ is $C^1(\R^2)$,
\begin{align*}
& \Theta_\varepsilon (x) \rightharpoonup m \sum_{j=1}^2  \delta(x-b_j) \quad \text{as } \varepsilon\to 0,
\quad
\mathop{\mathrm{supp}} \Theta_\varepsilon \subset \bigcup_{j=1}^k B_{C \varepsilon}(b_j),
\end{align*}
where the convergence is in the sense of measures and $C>0$ is a constant.
\end{theorem}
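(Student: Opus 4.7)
\textbf{Proof strategy for Theorem \ref{thm-rot}.} The plan parallels that for Theorem \ref{thm-vortex-pair}, with the translation velocity $ce_2$ replaced by the rotational field $\alpha x^\perp$. The first step is to reformulate \eqref{elliptic-rot} as a semilinear fractional elliptic equation. Since $\nabla^\perp \Psi + \alpha x^\perp = \nabla^\perp\!\bigl(\Psi + \tfrac{\alpha}{2}|x|^2\bigr)$, it suffices to look for $\Theta$ of the form
\begin{equation*}
\Theta = \Bigl(\Psi + \tfrac{\alpha}{2}|x|^2 - \mu_\ve\Bigr)^\gamma_+ ,
\qquad \Psi = (-\Delta)^{-s}\Theta ,
\end{equation*}
for a suitably chosen exponent $\gamma \in (1,(1+s)/(1-s))$ and Lagrange multiplier $\mu_\ve$. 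This reduces the existence problem to the fractional semilinear equation
\begin{equation*}
(-\Delta)^s\Psi = \Bigl(\Psi + \tfrac{\alpha}{2}|x|^2 - \mu_\ve\Bigr)^\gamma_+ \quad \text{in } \R^2,
\end{equation*}
which I would approach by singular perturbation, patching $k$ appropriately scaled copies of the radial plasma ground state $W$ of $(-\Delta)^sW=(W-1)^\gamma_+$ furnished by \cite{chan_uniqueness_2020}.

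Second, I would build a first approximation as follows. For points $b_1^\ve,\dots,b_k^\ve$ close to the polygon vertices $b_j$ in \eqref{rot-param} (with $b_j^\ve$ to be adjusted later), set
\begin{equation*}
\Psi_\ve^{(0)}(x) \;=\; \sum_{j=1}^{k} \mu_\ve\, W\!\Bigl(\tfrac{x-b_j^\ve}{\ve}\Bigr),
\end{equation*}
where the scales are linked by $\mu_\ve^{\gamma-1}\ve^{2s}=1$, so that each bump solves the unperturbed plasma equation and the total mass of $\Theta_\ve^{(0)}=(-\Delta)^s\Psi_\ve^{(0)}$ near $b_j$ converges to $m$ after a final fine tuning of $\mu_\ve$. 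The error $E_\ve$ has three contributions: the non-local tail interaction between the $k$ bumps (purely nonlocal, unlike the $s=1$ case), the quadratic rotating contribution $\tfrac{\alpha}{2}|x|^2$ viewed near each vortex, and a term coming from the shift $b_j^\ve-b_j$. All of these are small when measured in the appropriate weighted $L^\infty$ space around the core of each vortex.

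Third, I would set up a Lyapunov--Schmidt reduction. Writing $\Psi=\Psi_\ve^{(0)}+\phi$, the linearization at a single bump is, after rescaling,
\begin{equation*}
L\phi \;=\; (-\Delta)^s\phi - \gamma(W-1)^{\gamma-1}_+\phi ,
\end{equation*}
and here the new ingredient of the paper — the non-degeneracy of $W$ announced in the abstract — enters in a crucial way: its only bounded kernel elements are $\partial_{x_1}W,\partial_{x_2}W$. This yields invertibility of $L$ on the orthogonal complement of these two directions, uniformly in $\ve$, and standard contraction-mapping arguments then solve the projected nonlinear problem. To kill the remaining two-dimensional obstruction at each of the $k$ bumps, I would exploit the cyclic $\mathbb Z_k$ symmetry of the configuration \eqref{rot-param} and impose it on $\phi$; this cuts the $2k$-dimensional obstruction down to a $2$-dimensional one, whose solvability amounts precisely to the balance condition \eqref{reduced-rot} for the vertices of the regular polygon, which is satisfied by construction in \eqref{rot-param} (with $\alpha$ chosen as stated). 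A final one-parameter adjustment of $\mu_\ve$ (or equivalently of the common radius of the vortices) fixes the total mass at each vertex to be $m$.

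The main obstacle, as in Theorem \ref{thm-vortex-pair}, is the non-degeneracy of the radial plasma ground state together with sharp inversion estimates for the linearized operator $L$ in weighted spaces adapted to the free boundary $\{W=1\}$; the rotating case brings the additional technical nuisance that the symmetry breaking term $\tfrac{\alpha}{2}|x|^2$ is unbounded (unlike the traveling case where the analogous term is a linear $ce_2\cdot x$), but this is harmless after restricting attention to the bounded region where $\Theta$ is supported and using cutoffs. The $\mathbb Z_k$ symmetry is a genuine simplification, reducing the reduced problem to a single scalar equation for $\mu_\ve$, and ensuring that the polygon configuration is rigid under the perturbation.
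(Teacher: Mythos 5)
Your overall architecture matches the paper's: reduce \eqref{elliptic-rot} to a truncated semilinear fractional equation via the stream function, superpose scaled copies of the plasma ground state $W$, run a Lyapunov--Schmidt reduction based on the non-degeneracy of $W$, use the dihedral symmetry of the regular polygon to cut down the kernel, and close with a finite-dimensional adjustment. The paper indeed proves Theorem~\ref{thm-rot} as "a direct adaptation" of Theorem~\ref{thm-vortex-pair}, using the localized equation with cutoffs $\chi_{B_\delta(b_j)}$ and the ansatz \eqref{general-ansatz}. However, two concrete points in your write-up would derail the construction as stated.

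First, your scaling is over-determined. You impose $\mu_\ve^{\gamma-1}\ve^{2s}=1$ so that each bump solves $(-\Delta)^s W=(W-1)_+^\gamma$ with unit coefficient, but then the mass carried by each bump is $\mu_\ve\,\ve^{2-2s}M_\gamma=\ve^{\,2-2s-\frac{2s}{\gamma-1}}M_\gamma$, which tends to $0$ or $\infty$ unless $\gamma=\frac{1}{1-s}$, and even then generically does not equal $m$. There is no room left to "fine tune $\mu_\ve$" without breaking the constraint that the bump solves the unperturbed plasma equation. The paper resolves this by inserting the explicit factor $\varepsilon^{(2-2s)\gamma-2}$ in front of the truncated nonlinearity and introducing a second, $O(1)$ parameter $\mu$ (core size $\varepsilon\mu$, amplitude $\varepsilon^{2s-2}\mu^{-\frac{2s}{\gamma-1}}$), so that \eqref{mj} can be solved for $\mu$ in terms of the prescribed mass $m$; this is also where the hypothesis $\gamma\ne\frac{1}{1-s}$ enters. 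You need this two-parameter normalization.

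Second, you misattribute the purpose of the final one-dimensional adjustment. The mass is fixed once and for all by the choice of $\mu$ above; it is not what the last free parameter controls. After the symmetry reduction, one residual kernel direction survives (the radial translation of the vertices, all linked by the $\mathbb Z_k$ symmetry), and the projected problem carries a Lagrange multiplier $a$ as in \eqref{nonlinear}. The last step is to choose the polygon radius $\rho$ (not $\mu_\ve$) so that $a=0$; the reduced equation is a perturbation of the balance relation \eqref{reduced-rot}, and it is solvable not because \eqref{reduced-rot} "is satisfied by construction" but because the leading term is strictly monotone in $\rho$, so the perturbed equation $\alpha\rho-\mathrm{const}\cdot m\,\rho^{-(3-2s)}+O(\ve^{\kappa})=0$ has a root near the unperturbed one, exactly as in the proof of Theorem~\ref{thm-vortex-pair} where $d$ is adjusted. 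Finally, the cutoffs $\chi_{B_\delta(b_j)}$ must be present from the start (the equation $(-\Delta)^s\Psi=(\Psi+\tfrac{\alpha}{2}|x|^2-\mu_\ve)_+^\gamma$ on all of $\R^2$ has a right-hand side that grows at infinity and admits no solution with $\Theta$ supported near the vertices), and one must check a posteriori that the support of the nonlinearity lies strictly inside the balls so that the truncated equation really yields a solution of \eqref{elliptic-rot}; you gesture at this but it is a structural feature of the formulation, not a removable technicality.
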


A natural way of obtaining solutions  to the stationary problem \eqref{orthogonal} is to locally  impose that $\Theta(x) = f( \Psi(x) + c x_1) $  for a sufficiently regular function $f(u)$ so that \eqref{orthogonal} locally becomes  the elliptic equation
\begin{align}
\label{elliptic1}
(-\Delta)^s \Psi =  f( \Psi + c x_1) .
\end{align}
Similarly, locally  imposing
 $
\Theta(x) = f( \Psi(x)  + \alpha \frac{|x|^2}{2})$,
problem
\eqref{elliptic-rot} becomes
\begin{align}\label{elliptic2}
(-\Delta)^s \Psi =  f\Big( \Psi +\alpha \frac{|x|^2}{2} \Big) .
\end{align}  Using this observation,
 Gravejat and Smets  \cite{gravejat_smooth_2019}  have recently found solutions $\Theta(x)$ to problem
\eqref{orthogonal}  for $s=\frac 12$, with compact support and odd symmetry in $x_1$. They use  variational techniques applied to a suitable class of subcritical nonlinearities.

\subsection{Extensions}
Traveling solutions with multiple vortices can be found under  suitable non-degeneracy conditions for solutions of
system \eqref{reduced-trav}.
For instance, following \cite{liu_multi-vortex_2018}, we can consider configurations with $k$ vortices with intensities 1 located at  points $p_1, \ldots, p_k$
and $k$  vortices with intensities $-1$ located at $q_1, \ldots, q_k$.
We say that $ {\bf b} =  ({\bf p}, {\bf q})=(p_1, \ldots, p_k, q_1,\ldots, q_k)$ is an array of traveling vortices  if it satisfies the following conditions.
\begin{align}
\label{bal}
\left\{
\begin{aligned}
\sum_{j \not = i} \frac{p_i-p_j}{ |p_i-p_j|^{4-2s}}
-\sum_{ l=1 }^k \frac{p_i-q_l}{ |p_i-q_l|^{4-2s}} & = c \frac{2^{2s-1}\pi \Gamma(s)}{\Gamma(2-s)} e_1, \quad  i=1,\ldots, k,\\
\sum_{ l\not =m  } \frac{q_m-q_l}{ |q_m-q_l|^{4-2s}} -\sum_{j=1}^k \frac{q_m-p_j}{ |q_m-p_j|^{4-2s}} & =- c \frac{2^{2s-1}\pi \Gamma(s)}{\Gamma(2-s)} e_1, \quad m=1,\ldots, k,
\end{aligned}
\right.
\end{align}
where  $c\in \R$.
The set of points $ ({\bf p}, {\bf q})$ is called  a symmetric array of traveling vortices if in addition to (\ref{bal}) it satisfies
 \begin{equation}
 \label{sym1}
\left\{\begin{array}{l}
 q_i=-\bar{p}_i, \quad i=1,\ldots, k;\\
  \mbox{there exists} \ j_0 \ \mbox{such that} \ p_{2j-1}=\bar{p}_{2j},\quad  j=1, \ldots, j_0  \\
  \mbox{and} \ \mbox{Im} (p_j)=0,
  \mbox{for} \ j=2j_0+1,\ldots, k .
  \end{array}
  \right.
 \end{equation}
(Here $ \bar{z}$ denotes the conjugate of $z$ and $ Im(z)$ is the imaginary part of $z$.)
A symmetric array of traveling vortices $ ({\bf p}, {\bf q})$  is called nondegenerate if the linearization map at $ ({\bf p}, {\bf q})$ among points satisfying (\ref{sym1}) has only trivial kernel.

When $s=1$ these definitions are introduced in \cite{liu_multi-vortex_2018}.
There it is shown that the roots of certain Adler-Moser polynomials are nondegenerate symmetric arrays of traveling vortices with $k=\frac{n(n+1)}{2}$ for some integer $n$. As a consequence they constructed multiple vortices to the traveling wave equation to Gross-Pitaevskii equation.

By a perturbation argument in $s$, we also obtain that for each fixed $ k=\frac{n(n+1)}{2}$ there exist nondegenerate symmetric arrays of traveling vortices when $ s<1, |1-s| \ll 1$. Since the forces are analytic in $s$, we infer that except finite number of $s$, for each fixed $k=\frac{n(n+1)}{2}$, there exist a unique  nondegenerate symmetric array of traveling vortices. For general $s\in (0,1)$, it is an interesting and challenging question to find nondegenerate symmetric arrays of traveling vortices. In the  special case $s=\frac{1}{2}$ (the SQG case), we can use MatLab\footnote{We thank Prof. Yong Liu for the computations.} to compute numerically the existence of six nondegenerate symmetric arrays of traveling vortices (See Figure 1): $p_1=-\bar{q}_1= (-1.026, 0.563), p_2=-\bar{q}_2= (-1.026, -0.563), p_3=-\bar{q}_3= (0.368, 0). $
\begin{figure}
\caption{Figure 1. Six vortices}
\centering
{\includegraphics[
height=1.6in,
width=2.2in
]{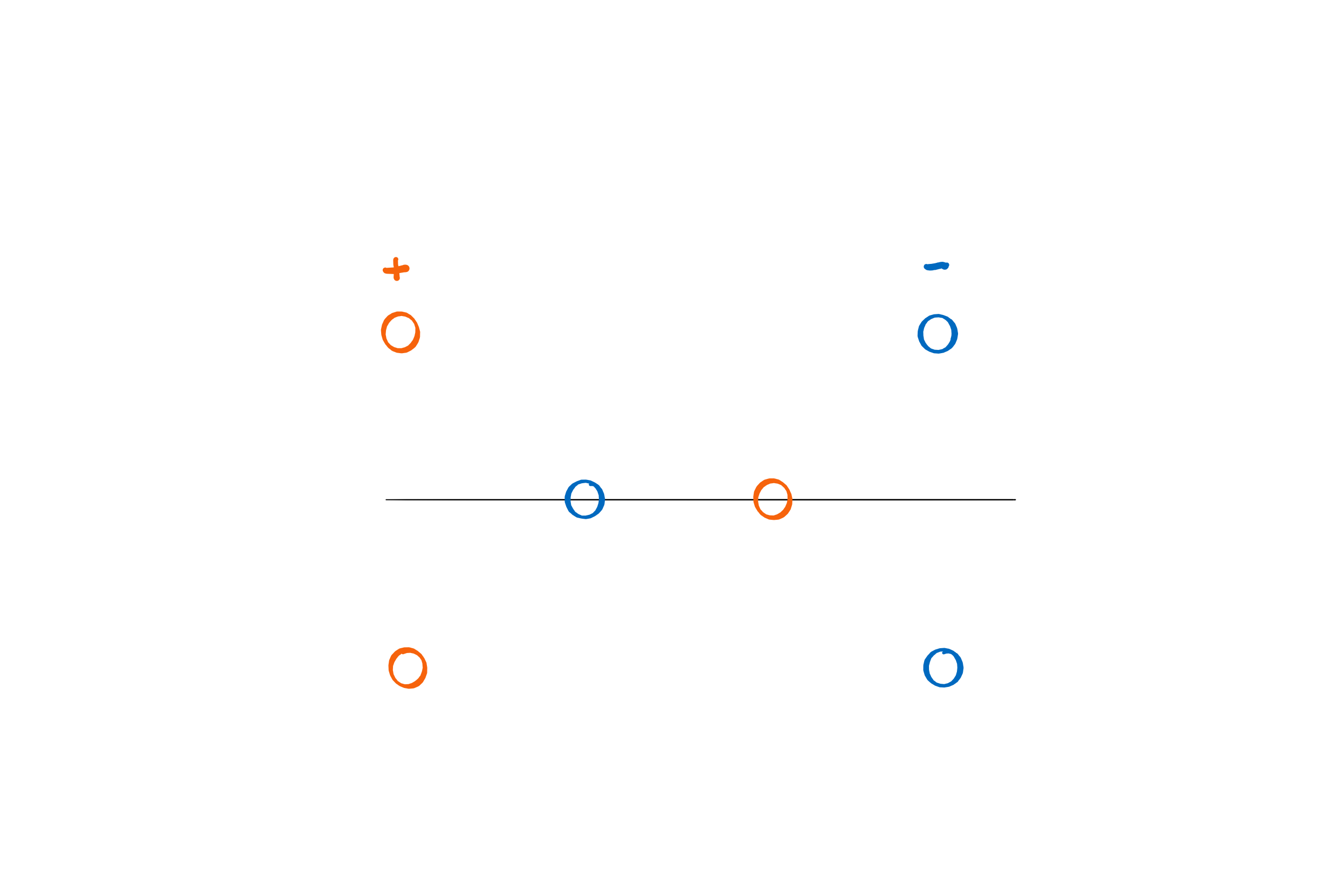}}
\end{figure}

We state the following theorem on the existence of multiple vortex-anti vortex solutions to \eqref{orthogonal}.
\begin{theorem}\label{thm-multiple-trav}
Let $ (p_1, \ldots, p_k, q_1,\ldots, q_k)$ be a nondegenerate symmetric array of traveling vortices. Then for $\ve>0 $ small, there exists a solution
$\Theta_\varepsilon$ to \eqref{orthogonal}
such that
\begin{equation*}
\Theta_\varepsilon(x) \rightharpoonup
\sum_{j=1}^k \delta (x-p_j)
-
\sum_{j=1}^k \delta (x-q_j)
\end{equation*}	
as $\ve \to 0$.	Moreover $\mathop{\mathrm{supp}} \Theta_\varepsilon \subset \bigcup_{j=1}^k B_{C \varepsilon}(b_j)$ and has $\Theta_\varepsilon$ has  the symmetries
\begin{align*}
\Theta_\varepsilon(x_1,x_2)=-\Theta_\varepsilon(-x_1,x_2)=\Theta_\varepsilon(x_1,-x_2)
\quad \text{for all }(x_1,x_2)\in \R^2 .
\end{align*}
\end{theorem}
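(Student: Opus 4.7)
The plan is to adapt the finite-dimensional reduction already carried out for the vortex pair in Theorem~\ref{thm-vortex-pair} to the multi-vortex setting. We pass from (\ref{orthogonal}) to its local elliptic reformulation (\ref{elliptic1}) with $f(u)=(u-\mu_\varepsilon)^{\gamma}_{+}$ for some admissible exponent $\gamma\in(1,\frac{1+s}{1-s})$, and construct an approximate stream function
\begin{equation*}
\Psi^{(0)}_\varepsilon(x)=\sum_{j=1}^{k} U_{p_j^\varepsilon,\varepsilon}(x)-\sum_{j=1}^{k}U_{q_j^\varepsilon,\varepsilon}(x),
\end{equation*}
where $U_{\xi,\varepsilon}$ is an appropriately rescaled and translated copy of the radial ground state $W$ of the fractional plasma problem, normalized so that the corresponding density bump $(U_{\xi,\varepsilon}+cx_1-\mu_\varepsilon)^{\gamma}_{+}$ carries unit mass and concentrates at $\xi$ as $\varepsilon\to 0$. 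The perturbed locations $\mathbf{b}^\varepsilon=(p_1^\varepsilon,\ldots,q_k^\varepsilon)$ are unknowns, kept throughout inside the symmetric class defined by (\ref{sym1}).

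Next I would run a Lyapunov--Schmidt reduction powered by the non-degeneracy of $W$ proved in this paper. Linearizing the nonlinear operator around $\Psi^{(0)}_\varepsilon$, each individual vortex contributes, after rescaling, the operator $(-\Delta)^s-\gamma(W-1)^{\gamma-1}_{+}$, whose only kernel elements are the translations $\partial_{x_1}W,\partial_{x_2}W$. Working in weighted norms adapted to the $|x|^{2s-2}$ decay of $G_s$, one obtains uniform invertibility of the full linearization on the $L^2$-orthogonal complement of the $4k$-dimensional approximate kernel spanned by $\partial_{x_i}U_{p_j^\varepsilon,\varepsilon}$ and $\partial_{x_i}U_{q_j^\varepsilon,\varepsilon}$, producing a smooth small corrector $\phi_\varepsilon=\phi_\varepsilon(\mathbf{b}^\varepsilon)$ that solves the projected equation. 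The symmetries in (\ref{sym1}) are preserved at every stage because the ansatz, the nonlinearity, and the approximate kernel are all equivariant under the relevant reflections, so we may restrict once and for all to the subspace of symmetric functions.

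The last step is to cancel the remaining Lagrange multipliers. A direct expansion, based on convolution with $G_s$ and on the fact that each vortex contributes, outside its core, a stream function behaving like $c_{2,s}\,m_j|x-\xi_j|^{2s-2}$, shows that the reduced system of equations for $\mathbf{b}^\varepsilon$ is, to leading order in $\varepsilon$, a nonzero constant multiple of the balance system (\ref{bal}) evaluated at $\mathbf{b}^\varepsilon$, with an error that is $o(1)$ as $\varepsilon\to 0$. The nondegeneracy hypothesis provides an invertible Jacobian on the symmetric subspace cut out by (\ref{sym1}), so the implicit function theorem yields $\mathbf{b}^\varepsilon\to(\mathbf{p},\mathbf{q})$ for which all multipliers vanish, completing the construction and giving the weak convergence together with the compact support statement. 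The main technical obstacle is identifying the Biot--Savart form of (\ref{bal}) from the reduced map: one has to track the slowly decaying fractional tails of $\Psi^{(0)}_\varepsilon$ at distances $O(1)$, isolate the dominant inter-vortex contributions from self-interaction terms and from the corrector $\phi_\varepsilon$, and quantify all remainders uniformly in $\varepsilon$ so that the implicit function theorem applies inside the symmetry class.
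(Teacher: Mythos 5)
Your proposal follows essentially the same route as the paper: the same ansatz of signed, rescaled fractional plasma ground states, a Lyapunov--Schmidt reduction carried out in the symmetry class and powered by the nondegeneracy of $W$ (Proposition~\ref{model-nondegeneracy}), and the identification of the leading order of the reduced equations with the balance system \eqref{bal}, which is then solved using the nondegeneracy of the array within \eqref{sym1}. The only point to make precise is that the nonlinearity must be taken in the signed, locally cut-off form of \eqref{eq-trav-multiple} rather than a single global $f(u)=(u-\mu_\varepsilon)_+^{\gamma}$, which could not produce the negative vortices at the points $q_l$.
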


A similar result can be found for the Euler equation case $s=1$, applying the results of \cite{CLW2014}.

\medskip{
}


More general traveling solutions than those in Theorem~\ref{thm-multiple-trav} can also be found.
Let us consider the functional of $k$ points
$\mathbf{b} = (b_1,\ldots,b_k) \in \R^{2k}$,
\begin{align*}
I({\bf b}) =
c \sum_{i=1}^k  m_i  (b_i\cdot e_1)
+ \frac{\Gamma(1-s)}{\pi 2^{2s-1}\Gamma(s)}
\sum_{i\not=j}
\frac{ m_im_j}{|b_i-b_j|^{2-2s}} .
\end{align*}
Critical points of $I(\bf b)$ correspond to solutions of system \equ{reduced-trav}. The functional $I$ is invariant under translations along the $e_2$-direction, and therefore all critical points of $I$ are degenerate.
We say that a critical point $\bf b$ of $I$ is non-degenerate up to vertical translations, if $D^2 I (\bf b)$ has a one-dimensional kernel.
The following result holds.

\begin{theorem}
\label{thm-trav-general}
If $\bf b$ is a critical point {\em non-degenerate up to vertical translations}  then there exists a solution
$\Theta_\ve(x)$ of Equation \equ{orthogonal}
such that
$\mathop{\mathrm{supp}} \Theta_\varepsilon \subset \bigcup_{j=1}^k B_{C \varepsilon}(b_j)$ and
\[
\Theta_\varepsilon (x)\rightharpoonup \sum_{j=1}^k m_j \delta(x-b_j)
\quad\hbox{as $\varepsilon\to 0$.}
\]
\end{theorem}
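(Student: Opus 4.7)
The plan is to carry out a Lyapunov--Schmidt reduction in the spirit of the proofs of Theorems~\ref{thm-vortex-pair} and \ref{thm-rot}, but adapted to the general configuration $\mathbf{b}=(b_1,\ldots,b_k)$. Since the theorem postulates general multipliers $m_j$ and a general non-degenerate critical point of $I$, the construction produces the profile near each $b_j$ independently and then glues them through a global reduction. Writing $\Theta_\varepsilon(x)=f(\Psi_\varepsilon(x)+cx_1-\mu_{j,\varepsilon})$ in a neighborhood of each $b_j$, with $f(u)=(u)_+^{\gamma}$ scaled and signed according to $m_j$, reduces \eqref{orthogonal} locally to a fractional plasma equation, whose radial ground state $W$ and its non-degeneracy (the central technical result of the paper) will drive the linear analysis.

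The first step is to build an approximate solution as a superposition of rescaled ground states centered at perturbed locations $\xi_j=b_j+\varepsilon\tau_j$, where $\boldsymbol{\tau}=(\tau_1,\ldots,\tau_k)\in\R^{2k}$ is a small parameter; the scaling is chosen so that each bump has mass $m_j$ and support of size $O(\varepsilon)$. The Lagrange multipliers $\mu_{j,\varepsilon}$ are selected to match the asymptotic behaviour of the Green's function of $(-\Delta)^s$ coming from the other vortices, i.e.\ so that the equation is solved to leading order away from the balance points. The second step is to linearize the resulting nonlinear fractional elliptic equation around this ansatz. By the non-degeneracy of $W$, the kernel of the linearized operator is, up to lower-order terms, spanned by the $2k$ translation modes $\partial_{x_1}W^{(j)}$ and $\partial_{x_2}W^{(j)}$, $j=1,\ldots,k$. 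A bounded right-inverse modulo this kernel can then be produced by standard arguments and a contraction mapping yields, for every admissible $\boldsymbol{\tau}$, a solution $\Phi_\varepsilon(\boldsymbol{\tau})$ of the projected equation.

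The third and decisive step is the finite-dimensional reduction: one must choose $\boldsymbol{\tau}$ so that the $2k$ projections onto the kernel vanish. A direct expansion (of precisely the type worked out in Theorems~\ref{thm-vortex-pair}--\ref{thm-rot}) shows that this reduced system is, up to a higher-order error, $\nabla I(\mathbf{b}+\varepsilon\boldsymbol{\tau})=0$. Thus the problem is to perturb a critical point of $I$ inside a degenerate variational problem.

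The main obstacle is therefore the translation invariance of $I$ along $e_2$, which forces $D^2I(\mathbf{b})$ to have a one-dimensional kernel. This is handled by passing to the quotient: we fix once and for all the vertical coordinate of, say, $b_1$, which kills the translation mode, and restrict the reduction to the $(2k-1)$-dimensional subspace transverse to it. Under the hypothesis that $\mathbf{b}$ is non-degenerate up to vertical translations, $D^2 I(\mathbf{b})$ restricted to this subspace is invertible, and the implicit function theorem (applied to the perturbed reduced map, whose higher-order error is controlled by the estimates from Steps~1--2) provides a unique $\boldsymbol{\tau}_\varepsilon=O(1)$ solving the reduced system. The resulting $\Theta_\varepsilon$ then enjoys the required concentration and support properties, completing the proof.
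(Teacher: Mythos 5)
Your overall strategy is the one the paper follows: reduce \eqref{orthogonal} to the elliptic problem \eqref{eqtrans}, take as ansatz the superposition \eqref{general-ansatz} of scaled ground states $W$ with the constants $\lambda_j$ fixed by \eqref{eq-lambda}--\eqref{eq-lambda2}, invert the linearization modulo the translation modes using the non-degeneracy result (Proposition~\ref{model-nondegeneracy}), solve the projected nonlinear problem by contraction, and identify the reduced finite-dimensional system with $\nabla_{\mathbf b} I(\mathbf b)+\mathcal N(\mathbf b)=0$ for an $\varepsilon$-small $\mathcal N$. Up to that point your proposal and the paper's (very brief) sketch in Section~\ref{sect-extensions} coincide.

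The difference, and where there is a genuine gap, is the final finite-dimensional step. You fix the vertical coordinate of $b_1$ and invoke the implicit function theorem on the transverse $(2k-1)$-dimensional slice. Two things are missing. First, a counting issue: restricting to the slice leaves $2k-1$ unknowns, but the reduced system still consists of $2k$ scalar equations; you must explain why the component of $\nabla I+\mathcal N$ along the translation direction $\mathbf e=(e_2,\ldots,e_2)$ vanishes automatically. This does follow from the variational structure of \eqref{eqtrans} together with the equivariance of the whole construction under vertical translations (the reduced system is the gradient of a translation-invariant reduced energy), but it has to be argued; ``killing the translation mode'' by freezing one coordinate does not by itself dispose of that last equation. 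Second, the implicit function theorem requires the reduced map, and hence $\mathcal N$, to be $C^1$ in the configuration with controllably small derivative; this in turn requires differentiability of the solution of the projected nonlinear problem with respect to the concentration points. The paper's theory (Proposition~\ref{existenceofnonlinearproblem}) establishes only \emph{continuity} of $\phi_d$ in $d$, and this is precisely why the paper closes the argument with a degree argument on the orthogonal complement of the translation direction, which needs only $C^0$ smallness of $\mathcal N$ plus invertibility of $D^2I(\mathbf b)$ on that complement. To make your route work you must either supply the $C^1$ dependence of the reduced error on the points (a nontrivial addition to the linear theory) or replace the implicit function theorem by the degree argument.
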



\medskip

A similar result holds in the case of rotating solutions.
Let $k\geq 2$ be an integer, $m_1,\ldots,m_k \in \R$ and $\alpha \not=0$.
Let us consider the energy functional
\begin{align*}
 J({\bf b}) = \frac \alpha 2 \sum_{i=1}^k  m_i  |b_i|^2
  + \frac{\Gamma(1-s)}{\pi 2^{2s-1}\Gamma(s)}
\sum_{i\not=j}
\frac{ m_im_j}{|b_i-b_j|^{2-2s}} ,
\end{align*}
where $\mathbf{b} = (b_1,\ldots,b_k) \in \R^{2k}$.
Critical points of $J$ correspond to solutions of \eqref{reduced-rot}. Since this functional is invariant under rotations around the origin, its critical points are always degenerate. We say that a critical point  $\mathbf{b}$  is non-degenerate up to rotations if $D^2 J(\mathbf{b})$ has a one-dimensional kernel.

\begin{theorem}
\label{thm-rot2}
Let $\mathbf{b} = (b_1,\ldots,b_k)$ be a  critical point of $J$ that is non-degenerate up to rotations.
Then for $\varepsilon>0$ small there exists a solution $\Theta_\varepsilon$ to \eqref{elliptic-rot} such that
\[
\Theta_\varepsilon (x)\rightharpoonup \sum_{j=1}^k m_j \delta(x-b_j)
\]
as $\varepsilon\to 0$.
Moreover $\mathop{\mathrm{supp}} \Theta_\varepsilon \subset \bigcup_{j=1}^k B_{C \varepsilon}(b_j)$.
\end{theorem}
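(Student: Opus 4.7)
The plan is to follow the same finite-dimensional reduction scheme as for Theorem \ref{thm-trav-general}, with the traveling profile $c x_1$ in \eqref{elliptic1} replaced by the rotational profile $\alpha |x|^2 / 2$ in \eqref{elliptic2}, and with translation invariance along $e_2$ replaced by rotational invariance about the origin.

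I would first reduce \eqref{elliptic-rot} to the scalar elliptic equation \eqref{elliptic2} via the local ansatz
\[
\Theta = \bigl(\Psi + \alpha\tfrac{|x|^2}{2} - \mu_\varepsilon\bigr)_+^\gamma
\]
for a fixed $\gamma \in (1,(1+s)/(1-s))$, chosen so that the local profile is the radial ground state $W$ of the fractional plasma problem $(-\Delta)^s W = (W-1)_+^\gamma$. The natural first approximation is a superposition near each vortex location $b_j$ of a rescaled, renormalized copy of $W$, scaled so that the total distributional mass concentrates on $m_j\,\delta_{b_j}$ as $\varepsilon \to 0$. After this rescaling, the far field of each bubble is governed by the fundamental solution $c_{2,s}|x-b_j|^{2s-2}$ of $(-\Delta)^s$ on $\R^2$, which is precisely the kernel appearing in $J(\mathbf{b})$.

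Next, I would carry out a Lyapunov--Schmidt reduction. The linearization at a single vortex, expressed in the stretched variable, is the operator $L = (-\Delta)^s - \gamma (W-1)_+^{\gamma-1}$ on $\R^2$. The non-degeneracy result announced in the abstract of this paper guarantees that $\ker L = \mathrm{span}\{\partial_{x_1} W, \partial_{x_2} W\}$, so the linearized operator for the full problem, restricted to the orthogonal complement of the $2k$-dimensional space spanned by the translation modes at each vortex, is uniformly invertible as $\varepsilon \to 0$. A contraction-mapping argument in a suitable weighted space then solves the nonlinear projected equation, leaving $2k$ Lagrange multipliers indexed by the configuration $\mathbf{b}$. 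These multipliers are the components of the gradient of a reduced functional $J_\varepsilon(\mathbf{b})$ for which, on compact subsets of the configuration space, one expects an expansion
\[
J_\varepsilon(\mathbf{b}) = A_0 + A_1\,\varepsilon^\sigma J(\mathbf{b}) + o(\varepsilon^\sigma) \qquad \text{in } C^1,
\]
with explicit constants $A_0, A_1$ and exponent $\sigma > 0$ depending on $s, \gamma$ and the intensities $m_j$, so that critical points of $J_\varepsilon$ near $\mathbf{b}$ correspond to genuine solutions of \eqref{elliptic-rot}.

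Finally, I would find a critical point of $J_\varepsilon$ near $\mathbf{b}$. Equation \eqref{elliptic-rot} is $SO(2)$-equivariant, so $J$ and $J_\varepsilon$ are both invariant under the diagonal rotation action $(b_1, \ldots, b_k) \mapsto (Q_\beta b_1, \ldots, Q_\beta b_k)$; consequently all critical points come in one-parameter orbits and the one-dimensional kernel of $D^2 J(\mathbf{b})$ is tangent to this orbit. Restricting $J_\varepsilon$ to a smooth slice transverse to the rotation orbit through $\mathbf{b}$, the non-degeneracy assumption makes the Hessian of $J$ invertible on the slice, and the implicit function theorem produces a critical point $\mathbf{b}_\varepsilon \to \mathbf{b}$ of $J_\varepsilon$, hence the desired $\Theta_\varepsilon$. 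The main technical obstacle I anticipate is obtaining the $C^1$-asymptotics of $J_\varepsilon$ with enough uniformity near the degenerate orbit: the nonlocal character of $(-\Delta)^s$ makes both the self-interaction of each bubble and the inter-vortex interactions delicate to estimate, and one must verify that the reduction's orthogonality conditions are compatible with the transverse slice so that the projected equation really coincides with the gradient of $J_\varepsilon$ along the slice.
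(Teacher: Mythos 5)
Your proposal follows the same overall strategy as the paper: reduce \eqref{elliptic-rot} to the semilinear problem \eqref{elliptic2} with a truncated nonlinearity, take as first approximation a superposition of rescaled copies of the fractional plasma ground state $W$ (normalized via \eqref{mj} so each bubble carries mass $m_j$), invoke the non-degeneracy of Proposition~\ref{model-nondegeneracy} to run a Lyapunov--Schmidt reduction, and then locate the configuration using the non-degeneracy of $\mathbf{b}$ up to rotations. The one place where you genuinely diverge is the final finite-dimensional step. You propose a \emph{variational} reduction: expand a reduced energy $J_\varepsilon(\mathbf{b})=A_0+A_1\varepsilon^\sigma J(\mathbf{b})+o(\varepsilon^\sigma)$ in $C^1$ and apply the implicit function theorem on a slice transverse to the rotation orbit. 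The paper instead reduces directly to the system $\nabla_{\mathbf{b}}J(\mathbf{b})+\mathcal N(\mathbf{b})=0$, where $\mathcal N$ is an $\varepsilon$-small, rotation-invariant remainder obtained by projecting the equation onto the kernel elements $\partial_{x_i}W$ (as in Proposition~\ref{reduedproblem}), and concludes by a degree argument combined with a local orthogonal decomposition of $\mathbf{b}$. The practical difference is that your route requires $C^1$ dependence of the remainder on $\mathbf{b}$ (you correctly flag this as the main obstacle), whereas the paper's fixed-point construction only yields \emph{continuity} of the perturbation $\phi_{\mathbf b}$ in $\mathbf{b}$; degree theory is robust under mere continuity, so the paper sidesteps the differentiability issue entirely. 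If you wish to keep the variational formulation you would need to additionally prove differentiability of $\mathbf{b}\mapsto\phi_{\mathbf b}$ (or argue that the projected equation is the gradient of $J_\varepsilon$ only up to controlled errors); otherwise the degree argument is the cheaper and safer closing step. Two small points: since the $m_j$ in Theorem~\ref{thm-rot2} may have either sign, your local ansatz must carry the signs $\sigma_j=\mathrm{sgn}(m_j)$ and the cut-offs $\chi_{B_\delta(b_j)}$ as in the paper's formulation, and one needs $\gamma\neq\frac{1}{1-s}$ so that the scaling relation \eqref{mj} is solvable for $\mu_j$.
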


In  \cite{campos-delpino-dolbeault}
a related problem in gravitation theory, consisting in the desingularization of rotating point masses in a continuous model of stellar dynamics, has been considered.
The result obtained there is similar to Theorem~\ref{thm-rot2} for $s=\frac{1}{2}$.
Critical points of $J$ for $s=\frac{1}{2} $
are called relative equilibria for the $N$-body problem. Their study is  classical  in celestial mechanics. In particular, it is known that for almost every choice of masses, critical points are non-degenerate up rotations and their number is estimated, see \cite{palmore-annals}. See also \cite{moulton, perko-walter,palmore-class1,palmore-class2,palmore-class3} and references therein.

\subsection{The fractional plasma problem}

The proof of  theorems~\ref{thm-vortex-pair}--\ref{thm-rot2} consists of gluing highly concentrated solutions of special elliptic equations of the form \eqref{elliptic1} and \eqref{elliptic2}.
For this purpose we will use suitably scaled radial solutions of the so-called fractional plasma problem.
In any space dimension $n\geq 2$, this is the semi-linear elliptic problem
\begin{align}
\label{plasma0}
\left\{
\begin{aligned}
(-\Delta)^s W &=(W - 1)_+^\gamma \quad \mbox{ in }\R^n	,
\\
W(x)& \to 0 \quad \text{as }|x|\to \infty ,
\end{aligned}
\right.
\end{align}
where $s\in (0,1)$, $1\leq \gamma < \frac{n+2s}{n-2s}$.
Problem (\ref{plasma0}) arises in the context of aggregation-diffusion equations, see \cite{carrillo_ground_2018}.



When  $s=1$ this free boundary from problem has been studied in  \cite{temam_non-linear_1975,temam_remarks_1977,bandle_harmonic_1996,caffarelli_asymptotic_1980,flucher_asymptotic_1998}.


\medskip
For $s=1$ solutions are radially symmetric up to translations and can be analyzed using ODE techniques. This is not possible in the nonlocal case  $s\in (0,1)$ and the analysis
becomes substantially harder.
Recently, in \cite{chan_uniqueness_2020} it has been proven that
(\ref{plasma0}) has a unique radial solution.
The proof relies on an application of a monotonicity formula developed for the fractional Schr\"odinger equation in  \cite{frank_uniqueness_2016}.

In this paper, we will use this ground state to (\ref{plasma0}) to construct solutions  by a Lyapunov-Schmidt reduction.
Our first contribution in this paper is to derive the non-degeneracy of the radial ground state solution of \eqref{plasma0}.






\medskip

The paper is organized as follows.
In section \ref{sect-scheme} we describe the elliptic equation we use to prove Theorems~\ref{thm-vortex-pair}, \ref{thm-multiple-trav}, and \ref{thm-trav-general}
and the form of the solution at main order.
In Section \ref{sec2}, we introduce the radial ground state solution to (\ref{plasma0}) and study the non-degeneracy of the linearized operator around it.
Section~\ref{sect-proof-thm-vortex-pair} is devoted to the proof of Theorem~\ref{thm-vortex-pair},
with some some arguments deferred for later: a solvability theory for the linearized equation is developed in section~\ref{sect-linear-th} and some computations associated to the nonlinear problem are in section~\ref{sect-nl}.
We give some ideas of the proofs of Theorems
\ref{thm-rot}--\ref{thm-rot2} in section~\ref{sect-extensions}.

\section{An elliptic equation for concentrated solutions of \texorpdfstring{\eqref{orthogonal}}{o}}
\label{sect-scheme}

To prove Theorems~\ref{thm-vortex-pair}, \ref{thm-multiple-trav} and \ref{thm-trav-general} we need to find a family of solutions  $ \Theta_\ve(x)$ to the equation
\begin{align}
\label{bb}
(\nabla^\perp \Psi-ce_2) \cdot  \nabla \Theta =0	,
\quad \Psi = (-\Delta)^{-s} \Theta ,
\end{align}
such that
$$
\Theta_\ve(x) \rightharpoonup \sum_{j=1}^k m_j\delta (x-b_j^0) ,
$$
for given intensities $m_j$ and a solution ${\bf b}^0= (b_1^0,\ldots, b_k^0)$ of system \equ{reduced-trav}.
In order to achieve this we consider the following elliptic problem.
\be\label{eqtrans}
\left\{
\begin{aligned}
(-\Delta)^s \psi &=
\varepsilon^{(2-2s)\gamma-2}
\sum_{j=1}^k \sigma_j( \sigma_j(\psi + c x_1) - \varepsilon^{2s-2} \lambda_j)_+^\gamma \chi_{B_\delta(b_j)}
\quad \text{in }\R^2,
\\
\psi(x) & \to 0 \quad \text{as }|x|\to \infty,
\end{aligned}
\right.
\ee
where we take
$\sigma_j= +1$  if  $m_j>0$ and $=-1$ if $m_j<0$. The scalars  $\lambda_j$ will be suitably chosen later on. We also assume that $1<\gamma<\frac{2+2s}{2-2s}$.  The number $\delta>0$ is fixed so that the balls $B_\delta(b_j)$ are disjoint and the points $b_1,\ldots,b_k$ are close to $b_1^0,\ldots,b_k^0$.

\medskip
We look for sufficiently smooth solutions $\psi$  of problem \equ{eqtrans} such that
$$
\Theta_\varepsilon(x) :=   \varepsilon^{(2-2s)\gamma-2}
\sum_{j=1}^k \sigma_j( \sigma_j(\psi + c x_1) - \varepsilon^{2-2s} \lambda_j)_+^\gamma \chi_{B_\delta(b_j)}(x)
$$
has its support  contained in $\bigcup_{j=1}^k B_\delta (b_j)$.
We readily check that the latter condition guarantees that $\Theta_\ve(x)$ solves \equ{bb}.

\medskip
We will find a solution of problem \equ{eqtrans}
which at main order looks like a superposition
of sharply scaled similar radial profiles centered near each of the points $b_j$.
In \cite{chan_uniqueness_2020} it was proved that there exists a unique radial solution $W(y)$ of the problem
\begin{align}
\label{plasma0-2}
\left\{
\begin{aligned}
(-\Delta)^s W &=(W - 1)_+^\gamma \quad \mbox{ in }\R^2	,
\\
W(y)& \to 0 \quad \text{as }|y|\to \infty ,
\end{aligned}
\right.
\end{align}
where $0<s<1$ and $1<\gamma<\frac{2+2s}{2-2s}$. This solution has the precise asymptotic behavior
\begin{equation}
\label{asympt}
W(y)=M_\gamma  c_{2,s}|y|^{-(2-2s)} (1+ o(1)) \quad \mbox{ as }|y|\to \infty,	
\end{equation}
where $M_\gamma =\int_{\R^n}(W-1)_+^\gamma dy>0$ and $c_{2,s}$ is given in \eqref{cns}.

We look for a solution of \equ{eqtrans}
that looks approximately like
\begin{align}
\label{general-ansatz}
\psi_0(x) =
\varepsilon^{2s-2}
\sum_{j=1}^k
\sigma_j \mu_j^{-\frac{2s}{\gamma-1}}
W\Big( \frac{x-b_j}{\varepsilon\mu_j}\Big),
\end{align}
where $\mu_j$ are positive constants.
Then  as $\varepsilon\to 0$ we have
\begin{align*}
(-\Delta)^s \psi_0 (x)
\rightharpoonup M_\gamma \sum_{j=1}^k
\sigma_j \mu_j^{2(1-\frac{s\gamma}{\gamma-1})} \delta(x-b_j).
\end{align*}
Therefore we fix $\mu_j>0$ such that
\begin{align}
\label{mj}
M_\gamma \sigma_j \mu_j^{2(1-\frac{s\gamma}{\gamma-1})} = m_j, \quad j=1,\ldots, k,
\end{align}
which is possible if in addition  we assume that $\gamma \not=\frac{1}{1-s}$.

We compute, for $x\in B_\delta(b_l)$,
assuming for simplicity that $\sigma_l=1$,
\begin{align}
\label{error0}
& (-\Delta)^s \psi_0
-\varepsilon^{(2-2s)\gamma-2}
\sum_{j=1}^k \sigma_j( \sigma_j(\psi_0
 +c x_1) - \varepsilon^{2-2s} \lambda_j)_+^\gamma \chi_{B_\delta(b_j)}
\\
\nonumber
& \quad = \varepsilon^{-2} \mu_l^{-\frac{2s\gamma}{\gamma-1}}
\Big[
\Big( W\Big(\frac{x-b_l}{\varepsilon \mu_l}\Big) -1 \Big)_+^\gamma
\\
\nonumber
& \qquad
-
\Big(
W\Big(\frac{x-b_l}{\varepsilon \mu_l}\Big)
+
\sum_{j\not=l}
\sigma_j
\mu_l^{\frac{2s}{\gamma-1}} \mu_j^{-\frac{2s}{\gamma-1}}
W\Big(\frac{x-b_j}{\varepsilon \mu_j}\Big)
+ c \mu_l^{\frac{2s}{\gamma-1}} \varepsilon^{2-2s} x_1 - \mu_l^{\frac{2s}{\gamma-1}} \lambda_l
\Big)_+^\gamma
\Big] .
\end{align}
We note that $\psi_0$ is a good approximation to a solution to \eqref{eqtrans} if the parameters $\lambda_j $ are chosen such that
\begin{align}
\label{eq-lambda}
\sum_{j\not=l}
\sigma_j
\mu_l^{\frac{2s}{\gamma-1}} \mu_j^{-\frac{2s}{\gamma-1}}
W\Big(\frac{b_l-b_j}{\varepsilon \mu_j}\Big)
+ c \mu_l^{\frac{2s}{\gamma-1}} \varepsilon^{2-2s} b_{l,1} - \mu_l^{\frac{2s}{\gamma-1}} \lambda_l
=-1.
\end{align}
Similarly, if $\sigma_l=-1$ we impose that
\begin{align}
\label{eq-lambda2}
-\sum_{j\not=l}
\sigma_j
\mu_l^{\frac{2s}{\gamma-1}} \mu_j^{-\frac{2s}{\gamma-1}}
W\Big(\frac{b_l-b_j}{\varepsilon \mu_j}\Big)
- c \mu_l^{\frac{2s}{\gamma-1}} \varepsilon^{2-2s} b_{l,1}
- \mu_l^{\frac{2s}{\gamma-1}} \lambda_l
=-1.
\end{align}
Using the expansion \eqref{asympt} we get that
\[
\lambda_l = \mu_l^{-\frac{2s}{\gamma-1}} + O(\varepsilon^{2-2s})
\]
as $\varepsilon\to 0$.
With this choice of $\lambda_l$, we find that the error of approximation created by $\psi_0$, has the estimate
\begin{align}
\nonumber
&
\varepsilon^2
\Big\{
(-\Delta)^s \psi_0
-\varepsilon^{(2-2s)\gamma-2}
\sum_{j=1}^k \sigma_j( \sigma_j(\psi_0+c x_1) - \varepsilon^{2-2s} \lambda_j)_+^\gamma \chi_{B_\delta(b_j)}
\Big\}
\\
\label{error1}
& \quad = O(\varepsilon^{3-2s})
\sum_{j=1}^k\chi_{B_{C \varepsilon}(b_j)}.
\end{align}
The proof of Theorems
\ref{thm-vortex-pair}, \ref{thm-multiple-trav} and \eqref{thm-trav-general} consists in finding a solution of problem \equ{eqtrans} as a suitable
small perturbation of the function
$\psi_0$ defined above. Linearizing the problem around $\psi_0$ and a
Lyapunov-Schmidt reduction procedure, transforms the problem into one of adjusting the points $b_j$ as a small perturbation of a solution of system
\eqref{reduced-trav}. Nondegeneracy of the reduced limiting problem and that of the linearized elliptic operator play a crucial role in the complete proof. The proofs of Theorems \ref{thm-rot} and \ref{thm-rot2} follow from similar considerations.

\medskip
Of central importance will be the understanding of invertibility properties of
the linearized operator of equation \eqref{plasma0-2}, namely
\begin{align*}
\left\{
\begin{aligned}
L_0[\phi] & :=(-\Delta)^s\phi - \gamma (W(y) -1)_+^{\gamma-1} \phi  ,
\\
\phi(y) & \to 0 \quad \hbox{as } |y|\to \infty .
\end{aligned}
\right.
\end{align*}
We will prove in Section~\ref{sec2} that the only decaying elements in the kernel of $L_0$ are the ones associated to the invariance of \eqref{plasma0-2} under translations.

We will proceed with the detailed proof of Theorem~\ref{thm-vortex-pair} in section~\ref{sect-proof-thm-vortex-pair} below.

\section{The nondegeneracy of the building blocks}
\label{sec2}

Recently, in \cite{chan_uniqueness_2020}, the authors studied the ground state solution $w$ to the fractional plasma equation:
\begin{equation}\label{model}
(-\Delta)^s W=(W-1)_+^\gamma \quad \mbox{ in }\R^n	,
\end{equation}
where $s\in(0,1)$, $ 1\leq \gamma<\frac{n+2s}{n-2s}$, \, and they proved the following:
\begin{theorem}[Theorem 1.1 in \cite{chan_uniqueness_2020}]
\label{thm-ground-state}
There exists a unique ground state $W$ to equation (\ref{model}). Moreover, the ground state solution satisfies:
\begin{itemize}
\item [(i)] $W(x)=W (|x|)$ is radial symmetric and decreasing in $r=|x|$;
\item [(ii)] It satisfies the following asymptotic behavior:
\begin{equation}\label{asymptotic}
W(x)=M_\gamma c_{n,s}|x|^{-(n-2s)} (1+ o(1)) \quad \mbox{ as }|x|\to \infty,	
\end{equation}
where $M_\gamma=\int_{\R^n}(W-1)_+^\gamma dx>0$.
Moreover,
\begin{equation}\label{asymptotic1}
W'(|x|)=-(n-2s) M_\gamma c_{n,s}|x|^{-(n-2s)-1} (1+ o(1)) \quad \mbox{ as }|x|\to \infty	.
\end{equation}
\end{itemize}	
\end{theorem}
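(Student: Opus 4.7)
The theorem has three distinct claims: existence of a ground state, its radial symmetry and monotonicity, and the sharp decay rate. Uniqueness is a separate, harder issue. I would separate the argument into these three components.

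\emph{Existence and radial monotonicity.} The natural variational setup is to consider, on the homogeneous Sobolev space $\dot H^s(\R^n)$, the functional
\begin{equation*}
E(W)=\tfrac12\int_{\R^n}|(-\Delta)^{s/2}W|^{2}\,dx-\tfrac1{\gamma+1}\int_{\R^n}(W-1)_+^{\gamma+1}\,dx,
\end{equation*}
and look for a ground state as a minimizer of $E$ on a suitable Nehari or Pohozaev manifold. Subcriticality $\gamma<(n+2s)/(n-2s)$ gives the compact embedding needed to recover strong convergence of a minimizing sequence up to translation. Applying fractional symmetric-decreasing rearrangement (Riesz rearrangement inequality for the Dirichlet form $\int|(-\Delta)^{s/2}W|^2$, which decreases, and pointwise monotonicity of the nonlinearity in $(W-1)_+$) one can assume the minimizer is radial, nonincreasing, and positive on a ball. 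Smoothness up to $C^{1,\alpha}$ follows from standard fractional elliptic regularity for bounded right-hand sides.

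\emph{Asymptotic decay.} Since $W\to 0$ at infinity and is radial decreasing, the coincidence set $\{W>1\}$ is a ball $B_{R_0}$, so the source $f=(W-1)_+^\gamma$ is bounded and compactly supported, and $M_\gamma=\int f<\infty$. Then
\begin{equation*}
W(x)=c_{n,s}\int_{\R^n}\frac{f(y)}{|x-y|^{n-2s}}\,dy,
\end{equation*}
and multiplying by $|x|^{n-2s}$ and letting $|x|\to\infty$, dominated convergence (using $|x-y|\sim|x|$ uniformly in $y\in B_{R_0}$) yields $W(x)=M_\gamma c_{n,s}|x|^{-(n-2s)}(1+o(1))$. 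Differentiating the representation under the integral sign gives the matching gradient asymptotic $W'(r)=-(n-2s)M_\gamma c_{n,s}r^{-(n-2s)-1}(1+o(1))$.

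\emph{Uniqueness.} This is the main obstacle, since the classical ODE shooting used for $s=1$ is unavailable for the nonlocal operator. The approach I would take follows the monotonicity-formula strategy of Frank--Lenzmann--Silvestre for fractional Schr\"odinger-type equations: lift $W$ to its Caffarelli--Silvestre extension $\widetilde W$ on $\R^{n+1}_+$ so that the problem becomes a degenerate-elliptic PDE with Muckenhoupt weight $y^{1-2s}$ and nonlinear Neumann-type boundary condition $-\lim_{y\to 0}y^{1-2s}\partial_y\widetilde W = \kappa_s(W-1)_+^\gamma$. One then introduces the one-parameter family of rescalings $\widetilde W^\lambda(x,y)=\widetilde W(\lambda x,\lambda y)$ and a Pohozaev-Rellich quantity $H(\lambda)$ built from $\widetilde W$ and $\widetilde W^\lambda$ whose derivative in $\lambda$ can be shown monotone via the convexity structure of the nonlinearity (the key algebraic input being that $(W-1)_+^\gamma$ is, after a shift, a pure-power nonlinearity with $\gamma>1$). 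If $W_1,W_2$ are two radial ground states with matching masses (which can be arranged by a rescaling using the invariance of $(-\Delta)^s$), monotonicity forces $H'\equiv 0$, hence $\widetilde W_1=\widetilde W_2$ on $\R^{n+1}_+$, and therefore $W_1=W_2$. The technical core is showing that the boundary terms in the monotonicity formula have the right sign; this is where the free-boundary nature of the problem at $\{W=1\}$ requires care and is what I would expect to be the most delicate step.
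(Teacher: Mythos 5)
This statement is quoted, not proved, in the paper: it is Theorem~1.1 of \cite{chan_uniqueness_2020}, and the authors only record that the uniqueness there ``relies on an application of a monotonicity formula developed for the fractional Schr\"odinger equation in \cite{frank_uniqueness_2016}.'' So the relevant comparison is with the cited works, and with the paper's own use of that monotonicity formula in the proof of Proposition~3.2. Your overall architecture (variational existence and rearrangement, Green's representation for the decay, Caffarelli--Silvestre extension plus a Frank--Lenzmann--Silvestre monotonicity quantity for uniqueness) is the right one, and your decay argument is essentially complete and correct.

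There are, however, two concrete gaps. First, in the existence step, subcriticality does \emph{not} give a compact embedding of $\dot H^s(\R^n)$ into $L^{\gamma+1}(\R^n)$; on the whole space no such embedding is compact. You must either restrict to radial nonincreasing functions (a Strauss-type lemma) or run concentration--compactness; as written the recovery of a minimizer fails. Second, and more seriously, the uniqueness sketch misuses both the scaling and the monotonicity formula. The dilation $W\mapsto\lambda^{2s/(\gamma-1)}(W(\lambda x)-1)+1$ sends solutions to solutions but changes the value at infinity to $1-\lambda^{2s/(\gamma-1)}$, so you cannot ``arrange matching masses by rescaling'' two solutions that both decay to zero: any nontrivial rescaling destroys the boundary condition, and it is precisely this condition that makes the ground state genuinely unique rather than unique up to scaling. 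Moreover, the Frank--Lenzmann--Silvestre quantity is monotone in the \emph{radial} variable $r$ of the extended function (see the function $H(r)$ in the paper's proof of Proposition~\ref{model-nondegeneracy}, Step~1), not in a dilation parameter $\lambda$; the way it is deployed for uniqueness is to write the difference $u=W_1-W_2$ of two radial solutions as a solution of a \emph{linear} equation $(-\Delta)^su=V(r)u$ with $V=\gamma\int_0^1(tW_1+(1-t)W_2-1)_+^{\gamma-1}\,dt$, normalize so that $u(0)=0$, and then use $H'(r)\le0$ together with $H(0)\le0$ and $H(\infty)=0$ to force $u\equiv0$. Your ``$H(\lambda)$ built from $\widetilde W$ and $\widetilde W^\lambda$'' does not produce such a linear problem, and no sign argument is available for it as stated.
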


In this section, we will study the linearized operator of this equation around the ground state,
namely,
\begin{equation}\label{model-linear}
L_0[\phi]=(-\Delta)^s \phi-\gamma(W-1)_+^{\gamma-1}\phi.
\end{equation}
By invariance of (\ref{model}) under translations, every directional derivative of $W$ is in the kernel of $L_0$.
Equation \eqref{model} is also invariant under the dilations
\[ \lambda^{\frac{2s}{\gamma-1}} ( W( \lambda x) -1 )+1
\quad \text{where} \quad \lambda>0.\] Thus
\[
z_0(x) = \frac{d}{d \lambda}
\big[
\lambda^{\frac{2s}{\gamma-1}} \big( W( \lambda x) -1 \big) \big]_{\lambda=1}
\]
also satisfies $L_0[z_0]=0$. Note that $z_0\in L^\infty(\R^n)$, but $\lim_{|x|\to \infty} z_0(x) = -\frac{2s}{p-1} \not=0$.

The main result here is the following.

\begin{proposition}
\label{model-nondegeneracy}
If $\phi$ is in the kernel of $L_0$ and $\phi(x)\to0$ as $|x|\to \infty$,
then $\phi$ is a linear combination of
$\frac{\partial W}{\partial x_1}, \cdots, \frac{\partial W}{\partial x_n}$.
\end{proposition}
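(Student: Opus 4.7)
The strategy is the classical one for non-degeneracy of radial ground states: decompose in spherical harmonics and analyze each angular mode separately. Since $W$ is radial, the potential $\gamma(W-1)_+^{\gamma-1}$ is radial, so $L_0$ commutes with rotations of $\R^n$. Writing $\phi(x)=\sum_{k\ge 0}\sum_l \phi_{k,l}(|x|)Y_{k,l}(x/|x|)$ in terms of spherical harmonics of degree $k$, each radial profile satisfies a one-dimensional nonlocal problem
$$
\mathcal{L}_k[\phi_{k,l}]:=\mathcal{A}_k\phi_{k,l}-\gamma(W-1)_+^{\gamma-1}\phi_{k,l}=0,
$$
where $\mathcal{A}_k$ denotes the radial part of $(-\Delta)^s$ acting on the $k$-th spherical mode. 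It will suffice to show that the space of decaying profiles in $\ker\mathcal{L}_k$ is trivial for $k=0$ and $k\ge 2$, and one-dimensional (spanned by $W'(r)$) for $k=1$.

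Translation invariance of \eqref{model} gives $\partial_{x_i}W\in\ker L_0$, whose radial profile at mode $k=1$ is $W'(r)$, which is strictly negative on $(0,\infty)$ by the monotonicity in Theorem \ref{thm-ground-state} and decays by \eqref{asymptotic1}. To prove that $W'$ is the only decaying element at $k=1$, the plan is to lift $\phi_1$ and $W'$ to their Caffarelli--Silvestre extensions on $\R^{n+1}_+$ and set up a Wronskian / Sturm--Picone type comparison: the strict sign of $W'$ inside the coincidence set $\{W>1\}$, combined with the strict radial monotonicity of the weight $(W-1)_+^{\gamma-1}$, should force any decaying solution of $\mathcal{L}_1\phi_1=0$ to be proportional to $W'$. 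For the remaining modes, the dilation invariance of \eqref{model} yields the radial element
$$
z_0(x)=\tfrac{2s}{\gamma-1}(W(|x|)-1)+|x|W'(|x|),
$$
which tends to $-\tfrac{2s}{\gamma-1}\neq 0$, and so does not decay. For $k=0$ I would apply (a linearized form of) the monotonicity formula of Frank--Lenzmann used in \cite{frank_uniqueness_2016,chan_uniqueness_2020}, evaluated along the extension of a radial kernel element, to force $\phi_0$ to be proportional to $z_0$; this contradicts decay unless $\phi_0\equiv 0$. For $k\ge 2$, the operator $\mathcal{A}_k$ carries a strictly positive centrifugal excess over $\mathcal{A}_1$; testing $\mathcal{L}_k[\phi_k]=0$ against $\phi_k$ through the extension and subtracting the analogous identity for $k=1$ should produce a positive definite quadratic form, forcing $\phi_k\equiv 0$.

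The main obstacle lies in the analysis of the higher angular modes and of the radial mode $k=0$. In the local case $s=1$ the decomposition reduces everything to an ODE that can be handled by standard Sturmian theory, but the nonlocality of $(-\Delta)^s$ forces one to work with the weighted extension problem on $\R^{n+1}_+$ and to control simultaneously the decay of $\phi$ at infinity and the free-boundary singularity of $(W-1)_+^{\gamma-1}$ on $\{W=1\}$, where the potential is only H\"older continuous. Adapting the monotonicity formula of \cite{chan_uniqueness_2020} from two nonlinear radial solutions to a pair consisting of $z_0$ and a radial kernel element, and establishing the centrifugal sign-definiteness for $k\ge 2$ in the presence of the nonlocal diffusion, is where the bulk of the technical work is expected.
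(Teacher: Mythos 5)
Your plan coincides in all essentials with the paper's proof: spherical harmonic decomposition, the Frank--Lenzmann--Silvestre monotonicity formula on the Caffarelli--Silvestre extension to reduce the radial mode to the non-decaying dilation mode $z_0$, a quotient/Sturm-type comparison with the positive extension of $-W'$ for the degree-one modes, and the positivity of the mode-one profile combined with $\mu_k>\mu_1$ to kill the higher modes. The only (immaterial) difference is that for $k\ge 2$ the paper tests the equation against the extension of $-W'$ over the set $\{\psi_k>0\}$ rather than forming the quadratic form in $\phi_k$ itself, two standard implementations of the same ground-state comparison.
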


Before giving the proof we introduce some notation.
Let
\[
V(r)=\gamma (W(r)-1)_+^{\gamma-1} ,
\]
so that  $ L_0[\phi] = (-\Delta)^s \phi - V(r) \phi$.

We will use the  extension problem for the fractional Laplacian  \cite{caffarelli-silvestre-extension}, so that equation \eqref{model-linear} can be solved through the boundary reaction problem
\begin{align}
\label{eqlinear1}
\left\{
\begin{aligned}
\partial_{yy}\Phi+\dfrac{1-2s}{y}\partial_y \Phi+\Delta_{\R^n}\Phi
&=0  && \mbox{in }\R^{n+1}_+,
\\
-\tilde d_s\lim_{y\to0} y^{1-2s}\partial_y \Phi
&=V(r) \phi &&\mbox{on }\R^n ,
\end{aligned}
\right.
\end{align}
where $\tilde d_s=-\frac{2^{2s-1}\Gamma(s)}{s\Gamma(-s)}$ and $\Phi|_{y=0}=\phi$. Were are using the notation
$(x,y) \in \R^{n+1}$, $x\in \R^n$, $y\in \R$ and $\R^{n+1}_+ = \{ (x,y) \in \R^{n+1} | y>0\}$.

Le us consider the spherical harmonic decomposition of $\mathbb S^{n-1}$. By $\mu_i$, $i=0,1,\ldots$ we denote the $i$-th eigenvalue of $-\Delta_{\mathbb S^{n-1}}$, repeated according to multiplicity and arranged in increasing order, and by $E_i(\theta)$ the corresponding eigenfunction.
In particular $E_0$ is constant and $E_1,\ldots,E_n$ are the coordinate functions $x_i$ normalized.
Then we can write
\begin{align}
\label{F}
\Phi(x,y)=\sum_{i=0}^\infty
\Phi_i(x,y), \quad
\Phi_i(x,y) = \psi_i(r,y)E_i(\theta),
\end{align}
where  $x = r \theta$, $r=|x|$, $\theta \in \mathbb S^{n-1}$, $y>0$,
so that $\Phi_i$  is the Fourier mode $i$ of $\Phi$.
We note that  $L_0[\phi_i]=0$ where $\phi_i(x) = \Phi_i(x,0)$.
We have also that $\psi_i$ satisfies:
\begin{align}
\label{eqlinear2}
\left\{
\begin{aligned}
\partial_{yy}\psi_{i}+\dfrac{1-2s}{y}\partial_y\psi_{i}
+ \partial_{rr}\psi_i
+ \frac{n-1}{r}\partial_r \psi_i -\frac{\mu_i}{r^2}\psi_i
&=0 &&\mbox{in }\R^{n+1}_+,
\\
-\tilde d_s\lim\limits_{y\to0} y^{1-2s}\partial_y \psi_i
&=V(r)\psi_i
&&\mbox{on }\R^n .
\end{aligned}
\right.
\end{align}

\begin{proof}[Proof of Proposition~\ref{model-nondegeneracy}]


Let $\phi\in L^\infty(\R^n)$ satisfy $L_0[\phi]=0$ and suppose also that
$ \phi(x) \to 0$ as $|x|\to\infty$.
We claim that
\begin{align}
\label{newtonian2}
\phi(x)
=(-\Delta)^{-s}[\gamma(\phi-1)_+^{\gamma-1}\phi](x)	=c_{n,s}\int_{\R^n}
\frac{\gamma(w(y)-1)_+^{\gamma-1}}{|x-y|^{n-2s}}\phi(y)\,dy.
\end{align}
Indeed, let
\[
\tilde \phi(x) = c_{n,s}\int_{\R^n}
\frac{\gamma(w(y)-1)_+^{\gamma-1}}{|x-y|^{n-2s}}\phi(y)\,dy ,
\]
and observe that
\[
|\tilde \phi(x)|\leq C(1+|x|)^{-(n-2s)}.
\]
We have that $\phi - \tilde \phi$ is $s$-harmonic, and by the result of \cite{fall}, it is an affine function. Since $\phi(x) - \tilde \phi(x) \to 0$ as $|x|\to \infty$ we deduce \eqref{newtonian2}.

\medskip\noindent
{\bf Step 1.} First we consider the mode zero case, that is, $\phi_0$ with the notation \eqref{F},  which is radial element of the kernel of $L_0$.

We claim that $\phi_0 = a z_0$ for some constant $a\in \R$.
Indeed, consider the function $u:=\phi_0 - a z_0$ where $a$ is chosen so that  $u(0)=0$.
We note that $L_0[u]=0$ and that $u\in L^\infty(\R^n)$.
We will use the argument in \cite{frank_uniqueness_2016} to prove that $u\equiv 0$.

The ground state $W$ of \eqref{model} constructed in Theorem~\ref{thm-ground-state} satisfies that there exists a unique $R_0>0$ such that $W(R_0)=0$.
By standard estimates for the extension problem \eqref{eqlinear1}, $W(r)$ is smooth for $r\not=R_0$
(see for example the appendix in \cite{frank_uniqueness_2016}
and \cite{cabre-sire}).
Hence $V(r)$ is smooth for $r\not=R_0$.
Let $U$ denote the extension of $u$ to $\R^{n+1}_+$ solving \eqref{eqlinear1}.
Following \cite{frank_uniqueness_2016,cabre-sire},
for $r>0$ let
\[
H(r) = \tilde d_{s}
\int_0^\infty \frac{y^{1-2s}}{2}
[ (\partial_r U(r,y) )^2
-(\partial_t U(r,y) )^2]\,dy
+ \frac{1}{2}V(r) u(r)^2 .
\]
We note that $H(r)$ is well defined and continuous for all $r\geq 0$ and smooth for $r\not=R_0$.
We also observe that $H(r)\to 0$ as $r\to \infty$, since $V(r)=0$ for $r>R$.
As in \cite{frank_uniqueness_2016} we have
\[
H'(r) = -\tilde d_{s}
\frac{n-1}{r}
\int_0^\infty y^{1-2s}
(\partial_r U(r,y) )^2
\,dy
+ \frac{1}{2}V'(r) u(r)^2  \leq 0 \quad\text{for } r>0, \ r\not=R_0.
\]
Since $H(0)\leq 0$ and $H(r)\to 0$ as $r\to \infty$ we conclude that $H(r) = 0$ for all $r>0$ and therefore $u$ must be constant. Since $u(0)=0$, we find that $u\equiv 0$, that is, $\phi_0 = a z_0$.
But since $\lim_{r\to\infty}z_0(r) \not=0$
and $\lim_{r\to\infty}\phi_0(r) =0$, we deduce that $a=0$, so $\phi_0 \equiv 0$.

\medskip\noindent
{\bf Step 2.} The modes $1,\ldots,n$.
We consider $\phi_i$, $i=1,\ldots,n$ with the notation \eqref{F}, which are elements in the kernel of $L_0$ in Fourier mode $i$. We denote by $\Phi_i$ the extension of $\phi_i$ and write $\Phi_i(x,y) = \psi_i(r,y) E_i(\theta)$ as in \eqref{F}.

Differentiating the equation \eqref{model}
we get
\[
L_0[z_i]=0, \quad z_i = - \frac{\partial W}{\partial x_i}.
\]
Let $Z_i$ be the extension of of $z_i$ solving \eqref{eqlinear1} and write
$ Z_i = Z_*(r,y) E_i(\theta)$ so that $Z_*$ solves \eqref{eqlinear2}.
Note that $z_i = W'(r) \frac{x_i}{|x|}$ and that $W'(r)<0$ by the results in \cite{chan_uniqueness_2020}.
By the strong maximum principle $Z_*(r,y)>0$ for all $r>0$ and all $y\geq 0$.
We consider the function $ \varphi = \frac{\psi_i}{Z_*}$ and note that it satisfies
\[
\frac{1}{r^{n-1}} \partial_r ( r^{n-1} Z_*^2 \partial_r \varphi)
+ \frac{1}{y^{1-2s}} \partial_y ( y^{1-2s} Z_*^2 \partial_y \varphi) = 0, \quad r>0, \quad y>0.
\]
We multiply this equation by $\varphi r^{n-1} y^{1-2s}$ and integrate in the region $D_{\epsilon,R} = \{(r,y)| r>0, \ y>0, \ \epsilon^2 < r^2+y^2 < R^2\}$ where $0<\epsilon< R$, to find that
\begin{align*}
\int_{D_{\epsilon,R}} Z_*^2 [ (\partial_r \varphi)^2
+ (\partial_y \varphi)^2] r^{n-1} y^{1-2s} \, d r d y
= - I_{\epsilon} + I_{R}
\end{align*}
where
\begin{align*}
I_{\rho} = \int_{\mathcal C_\rho}
Z_*^2
[ r \varphi \partial_r \varphi
+ y \varphi \partial_y \varphi ]
\frac{r^{n-1} y^{1-2s}}{(r^2+y^2)^{\frac{1}{2}}}
\,d \ell ,
\end{align*}
where $ \mathcal C_\rho = \{(r,y)|r>0,y>0,r^2+y^2=\rho^2\}$
and $\ell$ is archlength in $(r,y)$.
We claim that
\begin{align}
\label{limits}
\lim_{R\to \infty} I_R  = 0 ,
\quad
\lim_{\epsilon\to0} I_\epsilon  = 0.
\end{align}
To prove these statements, we note first that
\[
Z_*^2 \varphi \partial_r \varphi =
\psi_i \partial_r \psi_i - \frac{\psi_i}{Z^*} \psi_i \partial_r Z_* ,
\quad
Z_*^2 \varphi \partial_y \varphi =
\psi_i \partial_y \psi_i - \frac{\psi_i}{Z^*} \psi_i \partial_y Z_* .
\]
We recall some estimates for  first $R$
we observe that using the Poisson kernel for the operator $\Delta_x + \frac{1-2s{y}} \partial_y$ \cite{caffarelli-silvestre-extension} we have that
\[
Z_* (r,y)\geq c\frac{y^{2s}}{(r^2+y^2)^{\frac{n+2s}{2}}}, \quad
|\psi_i(r,y)| \leq C \frac{y^{2s}}{(r^2+y^2)^{\frac{n+2s}{2}}} ,
\]
for $r^2+y^2$ large, where $c>0$ is a constant. Using these estimates with similar ones for the derivatives we find that for $R$ large
\begin{align*}
|I_R| \leq C R^{-n-1-2s} ,
\end{align*}
where $C$ is a constant. This proves the first limit in \eqref{limits}.

For the estimate of $I_\epsilon$ when $\epsilon>0$ is small, we first observe that
\[
Z_*(r,y) \geq c r
\]
for $r^2+y^2<\delta^2$ and some $c>0$, $\delta>0$. This is proved using the maximum principle applied to the equation \eqref{eqlinear2} with the subsolution $r+b r^2 $ where $b>0$. We also have the estimates
\[
y^{1-2s} |\partial_y \psi_i|
+
y^{1-2s} |\partial_y Z_*|
\leq C
\]
for $r^2+y^2<\delta^2$, which follow from \cite{cabre-sire}[Lemma 4.5].
Then
\begin{align*}
|I_\epsilon|\leq C \epsilon^{n-2s} \to 0
\end{align*}
as $\epsilon\to0$.
This proves the second limit in \eqref{limits}.

Using \eqref{limits} we deduce that
\[
\int_{\{r>0,\,y>0\}} Z_*^2 [ (\partial_r \varphi)^2
+ (\partial_y \varphi)^2] r^{n-1} y^{1-2s} \, d r d y =0,
\]
which implies that $\varphi$ is constant. We deduce then that  $ \phi_i = c_i z_i$ for some constant $c_i$.

\medskip
\noindent
{\bf Step 3.} The remaining modes $m\geq{n+1}$. We use an integral estimate  as in \cite{davila_fast_2008, davila_supercritical_2007}.

As before, let $z_*(r) = - W'(r)>0$ and $Z_*(r,y)$ be its extension.
Consider $\phi_m$ with $m\geq n+1$ and $\Phi_m$ its extension, which we write as $\Phi_m(x,y) = \psi_m(r,y) E_m(\theta)$.

Let us rewrite \eqref{eqlinear2} as
\[\left\{
\begin{aligned}
div(y^{1-2s}\nabla\psi_m)
&=\mu_{m}\dfrac{y^{1-2s}}{r^2}\psi_m
    &&\text{in } \R^{n+1}_+,\medskip\\
-\tilde{d}_s\lim\limits_{y\to0}y^{1-2s}\p_{y}\psi_m
    &= V(r) \psi_m
    &&\text {on } \partial \R^{n+1}_+.
\end{aligned}
\right.\]
We multiply this equation by $Z_*1$ and the one with $m=1$ by $\psi_m$. Their difference gives the equality
\begin{align*}
(\mu_m-\mu_1)\frac{y^{1-2s}}{r^2}\psi_m Z_*
&=Z_* div(y^{1-2s}\nabla\psi_m)-\psi_m div(y^{1-2s}\nabla Z_*)\\
&=div(y^{1-2s}(Z_*\nabla\psi_m-\psi_m\nabla Z_*)).
\end{align*}

Let us integrate over the region where $\psi_m>0$.  The boundary $\partial\{\psi_m>0\}$ is decomposed into a disjoint union of $\partial^0\{\psi_m>0\}$ and $\partial^+\{\psi_m>0\}$, on which the extension variable $y=0$ and $y>0$, respectively.
Hence
\begin{align*}
0&\leq\tilde{d}_s(\mu_m-\mu_1)
\int_{\{\psi_m>0\}}\frac{\psi_m Z_*}{r^2}\,dxdy
\\
&=\int_{\partial^0\{\psi_m>0\}}\left( Z_* \lim_{y\to 0}y^{1-2s}\frac{\partial \psi_m}{\partial \nu}-\psi_m\lim_{y\to 0}y^{1-2s}\frac{\partial Z_*}{\partial\nu} \right)\,dx\\
& \quad +\int_{\partial^+\{\psi_m>0\}}y^{1-2s}\left(Z_*\frac{\partial \psi_m}{\partial \nu}-\psi_m\frac{\partial Z_*}{\partial \nu}\right)\,dxdy.
\end{align*}

The first integral on the right hand side vanishes due to the equations satisfied by $Z_*$ and $\psi_m$. Then we observe that on $\partial^+\{\psi_m>0\}$, one has $Z_*>0$, and $\psi_m=0$. This forces (using $\mu_m>\mu_1$)
\[\int_{\{\psi_m>0\}}\frac{\psi_m Z_*}{r^2}\,dxdy=0,\] which in turn implies $\psi_m\leq0$. Similarly $\psi_m\geq0$ and, therefore, $\psi_m\equiv0$ for $m\geq{n+1}$. This completes the proof of the Proposition \ref{model-nondegeneracy}.
	
\end{proof}

A different proof can be done using the ODE techniques for fractional problems as in \cite{ao_ode-methods_2020}.

\hide{

Let $\lambda $ be a fixed positive constant and consider now the problem
\begin{equation}\label{newlimit}
(-\Delta )^s u - (u-\lambda )_+^\gamma = 0 , \quad {\mbox {in}} \quad \R^n
\end{equation}
with $u(x) \to 0$ as $|x| \to \infty$, where $s\in(0,1)$, $ 1\leq \gamma<\frac{n+2s}{n-2s}$. This problem has a unique radial solution given by
\begin{equation}\label{wla}
w^\lambda (x) = \lambda w (\lambda^{\frac{\gamma-1}{2s}} x).
\end{equation}
The linearized operator of this equation around the $w^\lambda$ is given by
\begin{equation}\label{model-linear}
L_\lambda [\phi]=(-\Delta)^s \phi-\gamma(w^\lambda -\lambda )_+^{\gamma-1}\phi.
\end{equation}
By invariance of (\ref{model}) under translations, every directional derivative of $w$ is in the kernel of $L_0$.
Problem \eqref{model} is also invariant under the dilations
$ \lambda^{\frac{2s}{\gamma-1}} ( w( \lambda x) -1 )+1$ where $\lambda>0$. Thus
\[
z_0(x) = \frac{d}{d \lambda}
\big[
\lambda^{\frac{2s}{\gamma-1}} \big( w( \lambda x) -1 \big) \big]_{\lambda=1}
\]
also satisfies $L_0[z_0]=0$. Note that $z_0\in L^\infty(\R^n)$, but $\lim_{|x|\to \infty} z_0(x) = -\frac{2s}{p-1} \not=0$.

The main result here is the following.

\begin{proposition}
\label{model-nondegeneracy}
If $\phi$ is in the kernel of $L_0$ and $\phi(x)\to0$ as $|x|\to \infty$,
then $\phi$ is a linear combination of
$\frac{\partial w}{\partial x_1}, \cdots, \frac{\partial w}{\partial x_n}$.
\end{proposition}

}

\section{Construction of a vortex-anti vortex pair: the proof of Theorem~\ref{thm-vortex-pair}}
\label{sect-proof-thm-vortex-pair}

Let us consider the traveling vortex-anti vortex pair described by \eqref{trav-param}, centered at $b_1 = (d,0)$, $b_2=(-d,0)$ with masses $m$ and $-m$.
For the construction of the solution stated in Theorem~\ref{thm-vortex-pair} we take the formulation described in Section~\ref{sect-scheme} and specialize it
to
the following problem
\begin{align}
\label{eq00}
\left\{
\begin{aligned}
(-\Delta)^s \psi &=
\varepsilon^{(2-2s)\gamma-2}
\big( (\psi + c x_1 - \varepsilon^{2-2s} \lambda)_+ ^\gamma \chi_{B_\delta(b_1)}
-(-\psi - c x_1 -\varepsilon^{2-2s} \lambda)_+ ^\gamma \chi_{B_\delta(b_2)}
\big)
\\
& \qquad \text{in }\R^2 ,
\\
\psi(x) & \to 0 \quad \text{as }|x|\to \infty ,
\end{aligned}
\right.
\end{align}
where  we have taken the same $\lambda$ for both points.

\medskip
Given the symmetries of the problem, it is natural to construct a solution with
\begin{align}
\label{symmetry2}
\psi(x_1,x_2)=-\psi(-x_1,x_2)=\psi(x_1,-x_2)
\quad \text{for all }(x_1,x_2)\in \R^2 .
\end{align}
Following \eqref{general-ansatz} we take as a first approximation of a solution to \eqref{eq00}
\begin{align}
\label{psi01}
\psi_0(x) = \varepsilon^{2s-2}  \mu^{-\frac{2s}{\gamma-1}}
W\Big( \frac{x-b_1}{\varepsilon \mu} \Big)
-  \varepsilon^{2s-2}  \mu^{-\frac{2s}{\gamma-1}}
W\Big( \frac{x-b_2}{\varepsilon \mu} \Big) ,
\end{align}
where $W$ is the ground state of \eqref{plasma0-2}
and $\mu>0$ is defined as in relation
\equ{mj} by
\begin{align}
\label{masses}
M_\gamma  \mu^{2(1-\frac{s\gamma}{\gamma-1})} = m.
\end{align}

\medskip
Let us write
\begin{align*}
S(\psi)
&=
(-\Delta)^s \psi -
\varepsilon^{(2-2s)\gamma-2}
\big[ (\psi + c x_1 - \varepsilon^{2-2s} \lambda)_+ ^\gamma \chi_{B_\delta(b_1)}
\\
& \qquad \qquad
-(-\psi - c x_1 -\varepsilon^{2-2s} \lambda)_+ ^\gamma \chi_{B_\delta(b_2)}
\big] ,
\end{align*}
so that problem \eqref{eq00} is equivalent to $S(\psi)=0$ and $\psi(x)\to 0$ as $|x|\to \infty$.
We recall that the scalar $\lambda$ is defined by \eqref{eq-lambda},
which in the case of a vortex-anti vortex pair becomes
\begin{align}
\label{lambda2}
W\Big( \frac{b_1-b_2}{\varepsilon\mu}  \Big)
- c \varepsilon^{2-2s} \mu^{\frac{2s}{\gamma-1}} d
+  \mu^{\frac{2s}{\gamma-1}} \lambda
= 1  ,
\end{align}
since $b_1=(d,0)$, $b_2=(-d,0)$.
This gives $\lambda = \lambda(c,d) $ with the form
\[
\lambda = \mu^{-\frac{2s}{\gamma-1}} + O( \varepsilon^{2-2s}) >0.
\]
With this choice of $\lambda$, we obtain
from \eqref{error1} that for $y \in B_{\delta/(\mu \varepsilon)}(b_1')$
\begin{align}
\nonumber
S(\psi_0)
& = \varepsilon^{-2} \mu^{-\frac{2s\gamma}{\gamma-1}}
\Big[
(W(y-b_1')-1)_+^\gamma
\\
\nonumber
& \qquad \qquad \qquad
- \big( W(y-b_1') -1 + W (b_1'-b_2')- W(y-b_2')
\\
\label{error}
& \qquad \qquad \qquad
+ c \varepsilon^{3-2s} \mu^{\frac{2s}{\gamma-1}+1} (y_1-d)
\big)_+^\gamma \,
\chi_{ \{ y \in B_{\delta/(\varepsilon\mu)}(0) \} }
\Big] ,
\end{align}
where
\[
y = \frac{x}{\varepsilon\mu} = (y_1,y_2)\in \R^2,
\quad
b_j' = \frac{b_j}{\varepsilon\mu}.
\]
We will work with  the parameters $c$, $\mu>0$ fixed  and
\begin{align}
\label{condition-d}
d \in \Big(d_0,\frac{1}{d_0} \Big)
\end{align}
to be adjusted, where $d_0>0$ is fixed small. Then we see that
\[
S(\psi_0) = O \big(\varepsilon^{-2} \mu^{-\frac{2s\gamma}{\gamma-1}} \varepsilon^{3-2s} \chi_{B_{C \varepsilon} (0)} \big) ,
\]
for some constant $C>0$.

It will be convenient to work with the unknown $v$ defined by
\[
\psi(x) = \varepsilon^{2s-2} \mu^{-\frac{2s}{\gamma-1}}
v \Big( \frac{y}{\varepsilon\mu} \Big).
\]
We note that
\begin{align*}
S(\psi) = \varepsilon^{-2} \mu^{-\frac{2s\gamma}{\gamma-1}}
\big[ (-\Delta_y)^s v - f(y,v) \big],
\end{align*}
where
\begin{align*}
f(y,v) &= \big( v
+ c \varepsilon^{3-2s} \mu^{\frac{2s}{\gamma-1}+1} y_1
- \mu^{\frac{2s}{\gamma-1}} \lambda \big)_+^\gamma
\chi_{B_{\delta/(\varepsilon\mu)}(b_1')}
\\
& \quad
- \big( -v - c \varepsilon^{3-2s} \mu^{\frac{2s}{\gamma-1}+1} y_1
- \mu^{\frac{2s}{\gamma-1}} \lambda \big)_+^\gamma
\chi_{B_{\delta/(\varepsilon\mu)}(b_2')} .
\end{align*}

Thus  \eqref{eq00} becomes the nonlinear problem
\begin{align}
\label{eq01}
\left\{
\begin{aligned}
(-\Delta)^s v &= f(y,v) \quad \text{in }\R^2
\\
v(y) & \to 0 \quad \text{as }|y|\to \infty,
\end{aligned}
\right.
\end{align}
and the ansatz \eqref{psi01} takes the form
\begin{align}
\label{def-v0}
v_0(y) = W(y-b_1') - W(y-b_2').
\end{align}
We look for a solution of \eqref{eq01} of the form $v=v_0 + \phi$.
Then equation \eqref{eq01} is equivalent to
\begin{align}
\label{nonlinear1}
L[\phi] = - E + N[\phi]
\end{align}
where
\begin{align}
\nonumber
L[\phi] &= (-\Delta)^s \phi
- V(y)\phi , \quad V(y) = f_v(y,v_0)
\\
\nonumber
E &=  (-\Delta_y)^s v_0 - f(y,v_0)
\\
\label{def-N}
N[\phi] &= f(y,v_0+\phi) - f(y,v_0) - f_v(y,v_0)\phi.
\end{align}



The symmetries \eqref{symmetry2} allow us to invert the operator $L$ up to one parameter, associated with the function
\begin{align*}
Z=Z_1+Z_2, \quad
Z_1(y)=\frac{\partial W}{\partial y_1}(y-b_1'), \quad  Z_2(y)=\frac{\partial W}{\partial y_1}(y-b_2').
\end{align*}
Note that the function $Z$ is odd in $y_1$ and even  in $y_2$.
Let us consider the projected linear problem
\begin{align}
\label{linearproblem0}
\left\{
\begin{aligned}
& L[\phi]=h(y)+ a V(y) Z(y), \quad \text{in }\R^2 ,
\\
& \int_{\R^2} V Z \phi \, dy=0 ,
\\
& \phi(y) \to 0 \quad \text{as }|y|\to \infty.
\end{aligned}
\right.
\end{align}
%
%
We introduce the following norms to deal with the
linear problem \eqref{linearproblem0}:
\begin{align*}
\|\phi\|_* &= \sup_{y\in \R^2} \rho(y)^{-(2-2s)} |\phi(y)| ,
\qquad
\|h\|_{**} = \sup_{y\in \R^2} \rho(y)^{-(2+\sigma)} |h(y)| ,
\end{align*}
where $0<\sigma<1$ and
\[
\rho(y) = \frac{1}{1+|y-b_1'|} + \frac{1}{1+|y-b_2'|} .
\]

We have the following:
\begin{proposition}
\label{existenceoflinearproblem0}
Assume that $d$ satisfies \eqref{condition-d} and $h$ satisfies $\|h\|_{**}<\infty$ and  the symmetries \eqref{symmetry2}. Then for $\varepsilon>0$ small there exists a unique solution $\phi=T_d(h)$ of (\ref{linearproblem0}), which defines a linear operator of $h$ and there exists $C>0$ independent of $\ve $ such that
$$
\|\phi\|_*+|a|\leq C\|h\|_{**}.	
$$
Moreover $\phi$ satisfies the symmetries \eqref{symmetry2}.
\end{proposition}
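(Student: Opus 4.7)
The approach is the standard two-step scheme: prove the a priori estimate
\begin{equation*}
\|\phi\|_* + |a| \le C\|h\|_{**}
\end{equation*}
by a contradiction/blow-up argument, then obtain existence from the Fredholm alternative.

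Suppose the a priori bound fails along a sequence $\varepsilon_n\to 0$, $d_n\in(d_0,1/d_0)$, $\phi_n,h_n,a_n$ for which $\|\phi_n\|_*=1$ and $\|h_n\|_{**}\to 0$. First, testing the projected equation against $Z$ and using that $L[Z_j]$ is small in $\|\cdot\|_{**}$ --- because $Z_j$ belongs to the kernel of the single-bump linearization $L_0$ centered at $b_j'$, the only error coming from the interaction with the other bump and being $O(|b_1'-b_2'|^{-(3-2s)})$ --- together with $\int V Z^2\,dy \to 2\int\gamma(W-1)_+^{\gamma-1}(\p_{y_1}W)^2\,dy>0$, yields $|a_n|\le C\|h_n\|_{**}+o(1)$. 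Next, blow up at each concentration point: set $\tilde\phi_n^{(j)}(z)=\phi_n(b_j'^{(n)}+z)$. The definition of $\|\cdot\|_*$ combined with $|b_1'^{(n)}-b_2'^{(n)}|\to\infty$ yields $|\tilde\phi_n^{(j)}(z)|\le C(1+|z|)^{-(2-2s)}$ on balls of radii tending to infinity, and standard fractional-Laplacian regularity delivers subsequential locally uniform convergence to a bounded $\phi_*^{(j)}$ that solves $L_0[\phi_*^{(j)}]=0$ on $\R^2$ and vanishes at infinity.

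By Proposition~\ref{model-nondegeneracy}, $\phi_*^{(j)}\in\mathrm{span}\{\p_{y_1}W,\p_{y_2}W\}$. Passing the symmetries \eqref{symmetry2} to the limit kills the $\p_{y_2}W$ component (evenness in $y_2$), while the oddness in $y_1$ combined with $b_2=-b_1$ and the radiality of $W$ forces the survivors to satisfy $\phi_*^{(1)}=\phi_*^{(2)}=\alpha\,\p_{y_1}W$ for a single $\alpha\in\R$. Writing $V=V^{(1)}+V^{(2)}$ and $Z=Z_1+Z_2$ in the orthogonality condition $\int VZ\phi_n\,dy=0$, the cross terms vanish in the limit by the fixed-diameter localization of $V^{(j)}$ around $b_j'^{(n)}$ and the polynomial decay of $Z_k$ away from $b_k'^{(n)}$. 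The surviving diagonal terms give
\begin{equation*}
2\alpha\int\gamma(W-1)_+^{\gamma-1}(\p_{y_1}W)^2\,dy=0,
\end{equation*}
so $\alpha=0$ and $\phi_*^{(j)}\equiv 0$. Finally, the representation formula
\begin{equation*}
\phi_n(y)=c_{2,s}\int_{\R^2}\frac{V(z)\phi_n(z)+h_n(z)+a_nV(z)Z(z)}{|y-z|^{2-2s}}\,dz,
\end{equation*}
together with the compact support of $V$ in a fixed-radius union of balls around the centers $b_j'^{(n)}$ and the local vanishing of $\phi_n$, produces $|\phi_n(y)|\le o(1)\,\rho(y)^{2-2s}$, contradicting $\|\phi_n\|_*=1$.

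With the a priori bound in hand, existence comes from rewriting the projected problem as $(\mathrm{Id}-K)\phi=\tilde h$ on a weighted symmetric Banach space --- where $K$ is compact thanks to the compact support of $V$ and the smoothing properties of $(-\Delta)^{-s}$ --- and applying the Fredholm alternative on the strength of the injectivity just established; the symmetries of $\phi$ persist because the whole argument is carried out within the symmetric class. The main obstacle lies in the blow-up step: identifying the limit precisely enough to apply Proposition~\ref{model-nondegeneracy} (hence needing the right weighted decay and the passage of the orthogonality condition to the limit with negligible cross-contributions) is where all the structure of the problem --- the growing separation $|b_1'^{(n)}-b_2'^{(n)}|=2d_n/(\varepsilon_n\mu)\to\infty$, the sign structure, and the imposed symmetries --- must be combined.
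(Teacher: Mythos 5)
Your proposal is correct and follows essentially the same route as the paper: the same contradiction/blow-up argument for the a priori bound (estimate $|a|$ by testing against $Z$, blow up at the concentration points, invoke Proposition~\ref{model-nondegeneracy}, kill the $\partial_{y_2}W$ mode by the symmetry and the $\partial_{y_1}W$ mode by passing the orthogonality condition to the limit, then conclude via the representation formula), followed by the Fredholm alternative in the symmetric, orthogonally-constrained space. The only cosmetic difference is that you track both blow-up limits and relate them through the oddness in $y_1$, whereas the paper works with a single bump; the content is identical.
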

We prove this proposition in Section~\ref{sect-linear-th}.

\medskip

Instead of solving  problem \eqref{nonlinear1} directly, we consider the nonlinear projected problem
\begin{align}
\label{nonlinear}
\left\{
\begin{aligned}
& L[\phi]=-E+N[\phi] + a V Z, \quad \text{in }\R^2 \\
& \int_{\R^2} V Z \phi \, dy=0 ,
\\
& \phi(y) \to 0 \quad \text{as }|y|\to \infty .
\end{aligned}
\right.
\end{align}

\begin{proposition}
\label{existenceofnonlinearproblem}
Assume that $d$ satisfies \eqref{condition-d}.
There is $r_0>0$ such that for $\varepsilon>0$ small there exists a unique solution $\phi = \phi_d$  to (\ref{nonlinear})
in the ball $\|\phi\|_* \leq r_0$.
Moreover it satisfies
\begin{equation*}
\|\phi_d\|_*\leq C \varepsilon^{3-2s}
\end{equation*} 	
and $\phi_d$ is continuous with respect to $d \in (d_0,1/d_0)$.
\end{proposition}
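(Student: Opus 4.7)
The plan is to recast \eqref{nonlinear} as a fixed point problem by inverting the linear part via Proposition~\ref{existenceoflinearproblem0}. Specifically, I look for $\phi$ solving
\begin{equation*}
\phi = \mathcal{A}(\phi) := T_d\bigl(-E + N[\phi]\bigr),
\end{equation*}
where $T_d$ is the bounded linear right inverse constructed in Proposition~\ref{existenceoflinearproblem0}. To apply $T_d$, I first need the right-hand side to obey the symmetry \eqref{symmetry2}; the approximation $v_0$ in \eqref{def-v0} is odd in $y_1$ and even in $y_2$, and $f(y,v)$ is arranged so that $E$ and $N[\phi]$ inherit these symmetries whenever $\phi$ does, so the invariant subspace of $\|\cdot\|_*$-functions with these symmetries is preserved by $\mathcal{A}$.

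The first quantitative step is to control the error $E$ in the $\|\cdot\|_{**}$ norm. Rewriting the error computation \eqref{error0}--\eqref{error1} in the rescaled variable $y$ and using the choice \eqref{lambda2} of $\lambda$, the error is supported in small neighbourhoods of the two points $b_1',b_2'$ where $W\geq 1$, so $\rho(y)\sim 1$ there, and a direct Taylor expansion in the small parameters $W(b_1'-b_2')-1$ and $c\varepsilon^{3-2s}\mu^{\frac{2s}{\gamma-1}+1}(y_1-d)$ yields
\begin{equation*}
\|E\|_{**} \leq C \varepsilon^{3-2s}.
\end{equation*}
Here the asymptotics \eqref{asymptotic}--\eqref{asymptotic1} for $W$ are used to see that $W(b_1'-b_2')-1=O(\varepsilon^{2-2s})$.

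Next I estimate the nonlinear remainder. Because $f(y,v) = \pm(\pm v - \mu^{\frac{2s}{\gamma-1}}\lambda + \text{small})^\gamma$ on each support and $\gamma>1$, the map $v\mapsto f(y,v)$ is locally $C^{1,\min(\gamma-1,1)}$. A standard computation using that $V = f_v(y,v_0)$ is supported where $|y-b_j'|\leq R$ and bounded there gives, for $\|\phi_1\|_*,\|\phi_2\|_* \leq r_0$ with $r_0$ small,
\begin{equation*}
\|N[\phi]\|_{**} \leq C\|\phi\|_*^{1+\beta}, \qquad \|N[\phi_1]-N[\phi_2]\|_{**} \leq C\bigl(\|\phi_1\|_*+\|\phi_2\|_*\bigr)^{\beta}\|\phi_1-\phi_2\|_*,
\end{equation*}
where $\beta=\min(\gamma-1,1)>0$. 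Combining with Proposition~\ref{existenceoflinearproblem0}, the map $\mathcal{A}$ sends the ball $\{\|\phi\|_*\leq K\varepsilon^{3-2s}\}$ into itself and is a contraction there, provided $K$ is chosen large and $\varepsilon$ small. Banach's fixed point theorem then yields a unique solution $\phi_d$ in this ball, satisfying the required bound. Uniqueness in the larger ball $\{\|\phi\|_*\leq r_0\}$ follows from the same Lipschitz estimate together with the a priori smallness $\|\phi\|_*\ll 1$ forcing the iterate to land in the smaller ball.

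For continuity in $d$, note that the ansatz $v_0$, the potential $V$, the error $E$, the nonlinear operator $N$, and the linear inverse $T_d$ all depend continuously on $d\in(d_0,1/d_0)$ in the operator topology of the spaces $(\|\cdot\|_*,\|\cdot\|_{**})$; this is because the translated profiles $W(\cdot - b_j')$ and their derivatives are continuous in $d$ in these weighted norms, and the construction of $T_d$ in Proposition~\ref{existenceoflinearproblem0} is uniform in $d$. The contraction mapping argument with parameter then yields continuity of $\phi_d$ in $d$. I expect the main technical obstacle to be the verification of the Lipschitz estimate for $N$ with the low regularity $\gamma-1<1$ when $\gamma$ is close to $1$, and keeping careful track of the free boundary $\{v_0+\phi = \mu^{\frac{2s}{\gamma-1}}\lambda - c\varepsilon^{3-2s}\mu^{\frac{2s}{\gamma-1}+1}y_1\}$, which is close to but not equal to the free boundary of $W$.
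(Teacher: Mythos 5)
Your proposal is correct and follows essentially the same route as the paper: invert the linear part with $T_d$ from Proposition~\ref{existenceoflinearproblem0}, estimate $\|E\|_{**}\leq C\varepsilon^{3-2s}$ and $\|N[\phi]\|_{**}\leq C\|\phi\|_*^{\min(\gamma,2)}$ together with the H\"older--Lipschitz bound with exponent $\min(\gamma-1,1)$, and close a contraction argument; continuity in $d$ comes from continuous dependence of $E$, $V$ and $T_d$ on $d$. The only cosmetic difference is that you contract in the ball of radius $K\varepsilon^{3-2s}$ and then upgrade uniqueness to the fixed ball of radius $r_0$ by absorbing the superlinear term, whereas the paper contracts directly in the $r_0$-ball; both are fine.
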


The proof of this Proposition is in section~\ref{sect-nl}.

\medskip
We have obtained a solution $v_d=v_0+\varphi$
of
\begin{align}
\label{nl-p}
\left\{
\begin{aligned}
(-\Delta)^s v_d &= f(y,v_d) + a_d V Z,\quad \text{in }\R^2
\\
v_d(y) & \to 0 \quad \text{as }|y|\to \infty,
\end{aligned}
\right.
\end{align}
for some parameter $a_d$.
The fully solvability of \eqref{eq01} is reduced to finding $d$ such that  $a_d=0$.

Multiplying \eqref{nl-p} by $Z$ and integrating over $\R^2$, we have that
\begin{equation*}
a_d \int_{\R^2} V Z^2dx=	\int_{\R^2}[(-\Delta)^s v_d -f(y,v_d)]Zdy.
\end{equation*}

Thus  $a_d=0$ is reduced to
\begin{equation*}
\int_{\R^2}[(-\Delta)^s v_d-f(y,v_d)]\, Z\, dy=0.
\end{equation*}

We have the following:

\begin{proposition}
\label{reduedproblem}
If $d$ satisfies \eqref{condition-d}, then

\begin{equation*}
\int_{\R^2}[(-\Delta)^s v_d -f(y,v_d)]\, Z\, dy	=
c_0 \varepsilon^{3-2s}\Big(
\frac{1}{d^{3-2s}} +  c_1
\Big)
+ O(\varepsilon^{(3-2s) \min(\gamma,2)})
+ O(\varepsilon^{4-2s}),
\end{equation*}
where $c_0\not=0$ is a  constant and $c_1 = \frac{c}{m} \frac{4\pi \Gamma(s)}{\Gamma(2-s)}$ (here $c$ and $m$ are the parameters in \eqref{trav-param}).
\end{proposition}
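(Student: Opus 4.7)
The plan is to decompose $v_d = v_0 + \phi$, with $\phi = \phi_d$ the correction from Proposition~\ref{existenceofnonlinearproblem} satisfying $\|\phi\|_* \leq C\varepsilon^{3-2s}$. Writing $f(y,v_d) - f(y,v_0) = V\phi + N[\phi]$ with $N$ as in \eqref{def-N}, we obtain the identity
$$(-\Delta)^s v_d - f(y,v_d) = E + L[\phi] - N[\phi],$$
where $E = (-\Delta)^s v_0 - f(y,v_0)$. Testing against $Z$ and using the self-adjointness of $(-\Delta)^s$ yields
$$\int_{\R^2}[(-\Delta)^s v_d - f(y,v_d)]\, Z\, dy = \int_{\R^2} E\, Z\, dy + \int_{\R^2} \phi\, L[Z]\, dy - \int_{\R^2} N[\phi]\, Z\, dy.$$
The strategy is to show that the first term on the right produces the leading contribution stated in the proposition while the other two fall into the error.

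Next I would compute $\int_{\R^2} E\, Z\, dy$ explicitly. Near $b_1'$, setting $z = y - b_1'$ and using \eqref{lambda2} to eliminate $\lambda$, the expression \eqref{error} reduces to
$$-E(y) = (W(z) - 1 + R(z))_+^\gamma - (W(z)-1)_+^\gamma,$$
with $R(z) = -[W(z+b_1'-b_2') - W(b_1'-b_2')] + c\varepsilon^{3-2s}\mu^{\frac{2s}{\gamma-1}+1} z_1$. A first-order Taylor expansion of $W$ around $b_1'-b_2'$ together with the asymptotic \eqref{asymptotic1} yields
$$R(z) = \varepsilon^{3-2s} A_d z_1 + O(\varepsilon^{4-2s}), \qquad A_d = \frac{(2-2s)M_\gamma c_{2,s}\mu^{3-2s}}{(2d)^{3-2s}} + c\mu^{\frac{2s}{\gamma-1}+1}$$
uniformly for $|z|$ bounded. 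Since $(W-1)_+$ has compact support by Theorem~\ref{thm-ground-state}, Taylor-expanding the plasma nonlinearity in the small parameter $R$ gives $E(y) = -\gamma(W(z)-1)_+^{\gamma-1} R(z) + O(|R|^{\min(2,\gamma)})$ pointwise. Pairing against $Z_1(y) = \partial_{z_1} W(z)$ and using
$$\int_{\R^2} z_1(W-1)_+^{\gamma-1}\partial_{z_1} W\, dz = -\frac{M_\gamma}{\gamma},$$
obtained by integration by parts in $z_1$, produces the leading contribution $\varepsilon^{3-2s}M_\gamma A_d$ from the neighbourhood of $b_1'$. The cross-terms $\int E\, Z_2$ over a unit ball around $b_1'$ are $O(\varepsilon^{2(3-2s)})$ because $Z_2(y) = \partial_{y_1}W(y-b_2')$ decays like $|y-b_2'|^{-(3-2s)} = O(\varepsilon^{3-2s})$ there, by \eqref{asymptotic1}. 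The contribution near $b_2'$ is computed analogously and adds by the oddness in $y_1$ of both $E$ and $Z$ (a consequence of the antisymmetric sign choice $\sigma_1 = -\sigma_2$ and the symmetric placement of the points). Rewriting $A_d$ through the mass relation \eqref{masses} and the explicit value \eqref{cns} of $c_{2,s}$ delivers the stated form with $c_0 \neq 0$ and $c_1 = \tfrac{c}{m}\tfrac{4\pi\Gamma(s)}{\Gamma(2-s)}$.

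For the remainder terms, a direct computation gives $L[Z_i] = \gamma[(W(\cdot - b_i')-1)_+^{\gamma-1} - V]\, Z_i$. Since $V$ differs from $\gamma(W(\cdot-b_i')-1)_+^{\gamma-1}$ by the perturbation $R$ of size $\varepsilon^{3-2s}$ on the support of the plasma nonlinearity, $L[Z]$ has $\|\cdot\|_{**}$-norm of order $\varepsilon^{3-2s}$; combined with $\|\phi\|_*\leq C\varepsilon^{3-2s}$ this gives $|\int \phi\, L[Z]| = O(\varepsilon^{2(3-2s)})$. Similarly, the pointwise bound $|N[\phi]|\leq C|\phi|^{\min(2,\gamma)}$ on the support of $V$ produces $|\int N[\phi]\, Z| = O(\varepsilon^{(3-2s)\min(2,\gamma)})$. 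Both are absorbed into the error stated in the proposition. The main obstacle will be the accurate identification of the explicit constants: matching $c_1$ with the stated value $\tfrac{c}{m}\tfrac{4\pi\Gamma(s)}{\Gamma(2-s)}$ requires combining the mass normalization $M_\gamma\mu^{2(1-\frac{s\gamma}{\gamma-1})} = m$ from \eqref{masses} with the explicit value of $c_{2,s}$ from \eqref{cns} through a gamma-function identity. A secondary delicate point is the control of second-order corrections in $R(z)$ arising from $\nabla^2 W$ at $b_1'-b_2'$, which decay like $|b_1'-b_2'|^{-(4-2s)} = O(\varepsilon^{4-2s})$ by differentiating \eqref{asymptotic1} and precisely account for the $O(\varepsilon^{4-2s})$ term in the proposition.
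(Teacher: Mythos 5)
Your proposal follows essentially the same route as the paper's proof: the same decomposition into $\int E\,Z + \int \phi\, L[Z] - \int N[\phi]\,Z$, the same linearization of $E$ in the small perturbation followed by integration by parts against $Z_1$, the same symmetry argument near $b_2'$ and the same $O(\varepsilon^{2(3-2s)})$ bound on the cross terms, and the constant $c_1$ works out exactly as you indicate via \eqref{masses}, \eqref{cns} and $(1-s)\Gamma(1-s)=\Gamma(2-s)$. One small correction: since $t\mapsto t_+^{\gamma-1}$ is only $(\gamma-1)$-H\"older when $\gamma<2$, the difference $V-\gamma(W(\cdot-b_i')-1)_+^{\gamma-1}$ is of size $\varepsilon^{(3-2s)\min(\gamma-1,1)}$ rather than $\varepsilon^{3-2s}$, so $\int\phi\,L[Z]$ is $O(\varepsilon^{(3-2s)\min(\gamma,2)})$ rather than $O(\varepsilon^{2(3-2s)})$ --- still absorbed into the stated error, exactly as in the paper.
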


The proof of this proposition is in section~\ref{sect-nl}.

\medskip

\noindent

\begin{proof}[Proof of Theorem~\ref{thm-vortex-pair}]
The equation $a_d=0$ is reduced to
\begin{equation*}
\frac{1}{d^{3-2s}} -  c_1
+ g(d)=0,
\end{equation*}
where $g$ is continuous and $g(d) = O(\varepsilon^{(3-2s) \min(\gamma,2)})
+ O(\varepsilon^{4-2s}) $ as $\varepsilon\to 0$.
Therefore we can find a solution $d=c_1^{-\frac{1}{3-2s}}(1+o(1))$.
\end{proof}

\section{Linear theory}
\label{sect-linear-th}
Here we prove Proposition~\ref{existenceoflinearproblem0}
but before  we obtain an a-priori estimate.

\begin{proposition}
\label{aprioriestimate0}
Assume that $d$ satisfies \eqref{condition-d} and $h$ satisfies $\|h\|_{**}<\infty$ and  the symmetries \eqref{symmetry2}. Then there is $C$ such that for $\varepsilon>0$ small and
for any solution $(\phi,a)$ of (\ref{linearproblem0})
satisfying the symmetries \eqref{symmetry2},
\begin{equation}\label{apriori1}
\|\phi\|_*+|a|\leq C\|h\|_{**}	.
\end{equation}	
\end{proposition}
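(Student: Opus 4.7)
The plan is to argue by contradiction via a blow-up analysis. Assume the claim fails: there exist sequences $\varepsilon_n \to 0$, parameters $d_n \in (d_0, 1/d_0)$, right-hand sides $h_n$ satisfying the symmetries \eqref{symmetry2} with $\|h_n\|_{**} \to 0$, and solutions $(\phi_n, a_n)$ of \eqref{linearproblem0} with those same symmetries and $\|\phi_n\|_* + |a_n| = 1$. First, to control $a_n$ I would test the equation against $Z = Z_1 + Z_2$. Since each $Z_j$ lies in the kernel of the ``one-bump'' operator $(-\Delta)^s - V_j$, and the two bumps are separated by a distance of order $1/\varepsilon_n$ so that their cross-interaction is small by the algebraic decay \eqref{asympt} of $W$, one obtains
\[
a_n \int_{\R^2} V Z^2\,dy = \int_{\R^2} h_n Z\,dy + o(1),
\]
with $\int V Z^2 \to 2\int V_0(\partial_{y_1}W)^2 > 0$, whence $a_n \to 0$.

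Next, at each center I would study the translated sequence $\tilde\phi_n^{(j)}(y) := \phi_n(y + b_j^{n\,\prime})$. The bound $\|\phi_n\|_* \leq 1$ yields uniform local bounds, and standard H\"older regularity for fractional elliptic equations with bounded, compactly supported potential produces a $C_{\mathrm{loc}}$ limit $\phi_\infty^{(j)}$ solving $L_0[\phi_\infty^{(j)}]=0$ on $\R^2$ and decaying at infinity. By Proposition~\ref{model-nondegeneracy}, $\phi_\infty^{(j)} = \alpha_j\,\partial_{y_1}W + \beta_j\,\partial_{y_2}W$. The evenness of $\phi_n$ in $y_2$ is preserved by translation and forces $\beta_j = 0$. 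The oddness of $\phi_n$ in $y_1$, combined with $b_2^{n\,\prime} = -b_1^{n\,\prime}$, gives $\phi_\infty^{(2)}(y) = -\phi_\infty^{(1)}(-y)$, so $\alpha_2 = \alpha_1$. The orthogonality $\int V Z \phi_n = 0$ then passes to the limit, using that the contributions from the two centers are equal, and yields $2\alpha_1\int V_0(\partial_{y_1}W)^2 = 0$; hence $\alpha_1 = 0$ and $\phi_\infty^{(j)} \equiv 0$. Consequently $\phi_n \to 0$ uniformly on every compact set near both centers.

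To upgrade this local vanishing to $\|\phi_n\|_* \to 0$, I would use the Riesz representation
\[
\phi_n(y) = c_{2,s}\int_{\R^2} \frac{V(z)\phi_n(z) + h_n(z) + a_n V(z) Z(z)}{|y-z|^{2-2s}}\,dz .
\]
The $V\phi_n$ and $a_n V Z$ sources are supported in the $O(1)$-balls around the centers and are small there (from the previous step and from $a_n \to 0$), while $|h_n(z)| \leq \|h_n\|_{**}\,\rho(z)^{2+\sigma}$ is integrable with a coefficient $o(1)$. Since the Riesz kernel decays precisely like $|y-z|^{-(2-2s)}$, which matches the weight $\rho(y)^{2-2s}$ built into the $\ast$-norm, a case split into $|y|$ bounded versus $|y|$ large produces $\rho(y)^{-(2-2s)}|\phi_n(y)| = o(1)$ uniformly in $y$, contradicting the normalization. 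The main obstacle is this final convolution estimate: one must carefully track the transitional regime where $y$ is neither close to a center nor far at infinity, and verify that the weight exponent $2-2s$, tuned to the Green's function of $(-\Delta)^s$ in $\R^2$, absorbs the decay coming from each of the three sources. The subtle interaction between the two bumps (at distance $\sim 1/\varepsilon_n$) must also be shown to contribute only $o(1)$ corrections throughout the argument, which is ensured by \eqref{asympt}.
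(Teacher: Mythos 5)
Your proposal follows essentially the same route as the paper: estimate $a$ by testing the equation against $Z$ (using that each $Z_j$ is annihilated by its one-bump linearized operator and that the cross-interactions are $o(1)$), then a contradiction/blow-up argument in which the translated sequences converge to a decaying kernel element of $L_0$ that is killed by Proposition~\ref{model-nondegeneracy} together with the symmetries and the orthogonality condition, and finally the Riesz representation formula to upgrade local vanishing to smallness in the $\|\cdot\|_*$ norm. The only differences are cosmetic (you normalize $\|\phi_n\|_*+|a_n|=1$ rather than treating the two estimates sequentially, and you spell out more explicitly how the two-center orthogonality passes to a single-center one in the limit), so the argument is correct and matches the paper's proof.
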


\begin{proof}

First we claim that
\begin{equation}\label{estimateofc}
|a|\leq C(\|h\|_{**}+o(1)\|\phi\|_*).
\end{equation}	
The coefficient $c$ is determined by
\begin{equation*}
a\int_{\R^2} V Z^2dy=\int_{\R^2}L[\phi] Z\, dy-\int_{\R^2}h Zdy
\end{equation*}	
where one has
\begin{align*}
\int_{\R^2}VZ^2dy&=2\int_{\R^2}\gamma (W-1)_+^{\gamma-1}(\frac{\partial W}{\partial y_1})^2dy(1+o(1))
\\
&:=c_0(1+o(1))>0.	
\end{align*}
Note  that
\begin{align*}
\int_{\R^2}(-\Delta)^s \phi \, Z\, dy&=\int_{\R^2}\phi\, (-\Delta)^s\, Z\, dy\\
&=\gamma
\int_{\R^2} \phi[(W(y-b_1')-1)_+^{\gamma-1}Z_1 + (w(y-b_2')-1)_+^{\gamma-1}Z_2]dy.
\end{align*}
But for $y_1\geq 0$,
\begin{align*}
| \gamma (W(y-b_1')&-1)^{\gamma-1}_+Z_1
+\gamma(W(y-b_2')-1)_+^{\gamma-1}Z_2- V |\\
&=|\gamma(W(y-b_1')-1)^{\gamma-1}_+Z_1
- \gamma ( W(y-b_1) -1 + O(\varepsilon^{3-2s})
)_+^{\gamma-1}|
\\
& \leq
C \varepsilon^{(3-2s)\min(\gamma-1,1)}
\chi_{B_{R_0}(b_1')}
\end{align*}
where  $R_0$ is a fixed constant.
So
\begin{align*}
|a|&\leq C[O(\varepsilon^{(3-2s)\min(\gamma-1,1)} )\|\phi\|_*+\|h\|_{**}]
\end{align*}
and \eqref{estimateofc} follows.

Next we claim that
\begin{equation}\label{apriori}
\|\phi\|_*\leq C\|h\|_{**}.
\end{equation}
We argue by contradiction, assuming that there exist
$\ve_n\to 0$, $(\phi_n, a_n)$ solution  to (\ref{linearproblem0}) for some $h_n$ and $b_{1,n} = (d_n,0)$, $b_{2,n} = (-d_n,0)$ with $d_n$ satisfying \eqref{condition-d}, such that
\begin{equation}\label{assumption}
\|\phi_n\|_{*}=1, \ \|h_n\|_{**}\to 0 \quad \mbox{ as }n\to \infty,
\end{equation}	
and such that $\phi_n$, $h_n$ satisfy the symmetries \eqref{symmetry2}.
First we show that for any fixed $R_1>0$, we have that
\begin{equation}\label{trivial}
\|\phi_n\|_{L^\infty(B_{R_1}(b_{1,n}'))}+\|\phi_n\|_{L^\infty(B_{R_1}(b_{2,n}'))}\to 0,
\end{equation}	
where $b_{j,n}' = \frac{b_{j,n}}{\varepsilon\mu}$.
Indeed assume that for a subsequence $\|\phi_n\|_{L^\infty(B_{R_1}(b_{1,n}'))}\geq \gamma_0 >0$.
Let us set
\begin{equation*}
\bar{\phi}_n(y)=\phi_n(y+b_{1,n}').
\end{equation*}	
Then $\bar{\phi}_n$ satisfies
\begin{equation*}
(-\Delta)^s\bar{\phi}_n-V(y+b_{1,n}')\bar{\phi}_n=h_n(y+b_{1,n}')+a_n V(y+b_{1,n}')Z(y+b_{1,n}').	
\end{equation*}
One can rewrite it as
\begin{equation*}
(-\Delta)^s\bar{\phi}_n-\gamma (W-1)_+^{\gamma-1}\bar{\phi}_n=\bar{h}_n	
\end{equation*}
where
\begin{equation*}
\bar{h}_n(y)=h_n(y+b_{1,n}')+(V(y+b_{1,n}')-\gamma(W(y)-1)_+^{\gamma-1})\bar{\phi}_n+a_n V(y+b_{1,n}') Z(y+b_{1,n}').
\end{equation*}
Using that $V(y+b_{1,n}')-\gamma(W(y)-1)_+^{\gamma-1} = O (\varepsilon_n^{(3-2s)\min(\gamma-1,1)} ) $ and the assumptions
\eqref{assumption} we have $\bar{h}_n\to 0$ uniformly on compact sets. Thus passing to a sub-sequence, we may assume that $\bar{\phi}_n$ converge uniformly on compact sets
to a  function $\bar{\phi}$ with
\begin{equation}
\label{nontrivial}
\|\bar{\phi}\|_{L^\infty (B_{R_1}(0))}\geq \gamma_0.
\end{equation}
Furthermore, the limit function $\bar{\phi}$ is even in $y_2$, it solves
\begin{equation*}
(-\Delta)^s \bar{\phi}-\gamma (w-1)_+^{\gamma-1}\bar{\phi}=0,
\end{equation*}
with $\bar{\phi} (y) \to 0$ as $|y| \to \infty$
and it satisfies the orthogonality condition $\int_{\R^2}(W-1)_+^{\gamma-1}\bar{\phi}\frac{\partial W}{\partial y_1}dy=0$.
By the non-degeneracy Proposition \ref{model-nondegeneracy}, necessarily one has $\bar{\phi}=0$, which is a contradiction to (\ref{nontrivial}).
Next from the equation satisfied by $\phi_n$, one has
\begin{equation*}
(-\Delta)^s\phi_n=V \phi_n+h_n+a_n V Z.
\end{equation*}
Arguing as in Proposition~\ref{model-nondegeneracy} we have
\begin{equation*}
\phi_n(y)=c_{n,s}\int_{\R^2}
\frac{V\phi_n+h_n+a_n V Z}{|y-z|^{2-2s}}dz
\end{equation*}
and this implies that
\begin{equation*}
\rho(y)^{-(2-2s)}|\phi_n(y)|\leq C(\|h\|_{**}+|a_n|+\|\phi_n\|_{L^\infty (B_R(b_1')\cup B_R(b_2'))})	,
\end{equation*}
for some $R>0$ independent of $n $. Thus combining (\ref{trivial}), (\ref{assumption}) and (\ref{estimateofc}), one  gets that $\|\phi_n\|_*\to 0$, which is a contradiction to (\ref{trivial}).
So 	combining (\ref{estimateofc}) and (\ref{apriori}), one has (\ref{apriori1}).
\end{proof}

\begin{proof}[Proof of Proposition \ref{existenceoflinearproblem0}]
Consider
\begin{equation*}
H=\Big\{\phi\in  \mathring{H}^s(\R^2) \, | \,
\phi \text{ satisfies \eqref{symmetry2}, }
\int_{\R^2} V \, Z \, \phi \, dy=0 \Big\},
\end{equation*}
endowed with the inner product
\begin{equation*}
\begin{split}
[ \phi, \psi]
&=\int_{\R^2}\int_{\R^2}\frac{(\phi(x)-\phi(y))(\psi(x)-\psi(y))}{|x-y|^{2+2s}}dxdy.
\end{split}
\end{equation*}
Problem (\ref{linearproblem0}) expressed in weak form is equivalent to finding $\phi\in H$ such that
\begin{equation*}
[\phi,\psi]=\langle V \phi+h, \psi\rangle, \forall \ \psi \in H.
\end{equation*}

Using the Riesz's representation theorem, this equation gets rewritten in $H $ in the operational form
\begin{equation*}
\phi=(-\Delta)^{-s}( V\phi)+\tilde{h}:=K(\phi)+\tilde{h}
\end{equation*}
for a certain $\tilde{h}$ depends linearly in $h$ and $K$ is a compact operator in $H$. Fredholm's alternative guarantees unique solvability of this problem for any $h$ provided that the homogeneous equation
\begin{equation*}
\phi=K(\phi)
\end{equation*}
has only the zero solution in $H$. This follows from the apriori estimate in Proposition \ref{aprioriestimate0}.
\end{proof}

\hide{
We conclude this section by analyzing the differentiability with respect to the parameter $p$ of the solution $\phi=T_p[h]$ of (\ref{linearproblem0}).

\begin{lemma}
The map $d\to T_d$ is continuously differentiable and for some $C>0$, one has
\begin{equation}
\label{estimateofdifferentiation1}
\color{red}
\|\partial_p T_p\|\leq C.	
\end{equation}	
\end{lemma}
\begin{proof}
Let us proceed formally first, assuming that $T_d$ is differentiable.
Let $\phi=T_d(h)$, and $\psi=\partial_d T_d(h)$, $\bar{a}=\partial_d a$. Then by differentiation of (\ref{linearproblem0}), we get
\begin{equation}\label{differentiation}
(-\Delta)^s\psi-V\psi-\partial_d V\phi=\bar{a} V Z+a\, \partial_d(V Z),	
\end{equation}
\begin{equation}\label{ortho}
\int_{\R^2}V Z\, \psi\, dy+\phi\partial_d(V Z)\, dy=0.
\end{equation}

We let $\bar{\psi}=\psi-\Pi[\psi]$ where $\Pi[\psi]$ denotes the orthogonal projection of $\psi$ onto the space spanned by $f'(U)Z$, that is,
Let
\begin{equation}\label{alpha1}
\Pi[\psi]=\alpha V Z	
\end{equation}
with $\alpha$ such that
\[
\int_{\R^2} (\psi - \Pi[\Psi]) V Z \, d y =0 .
\]
Writing relation (\ref{ortho}) as
\begin{equation}\label{alpha2}
\int_{\R^2}\Pi(\psi) V Z +\phi\partial_d(V Z)\, dy=0,
\end{equation}
we get
\begin{equation}\label{alpha}
|\alpha|\leq C\|\phi\|_*\leq C\|h\|_{**}.
\end{equation}

\color{red}
From (\ref{differentiation}), we have then that
\begin{equation}\label{barpsi}
(-\Delta)^s\bar{\psi}-V \bar{\psi}
=\tilde{ h}+\bar{a} V Z
\end{equation}
where
\begin{equation*}
\tilde{h}=\partial_d
(V \phi+a \partial_d(V Z)-[(-\Delta)^s-V]\Pi(\psi).	
\end{equation*}
We can write it as
\begin{equation*}
	\bar{\psi}=T_p (\tilde{h}),
\end{equation*}

we see that
\begin{equation*}
\|\bar{\psi}\|_*\leq C\|\tilde{h}\|_{**}	.
\end{equation*}

By Proposition \ref{aprioriestimate0} and (\ref{alpha}), we have
\begin{equation*}
	\|\tilde{h}\|_{**}\leq C\|h\|_{**}
\end{equation*}
and thus
\begin{equation}\label{estimateofpsi}
\|\psi\|_*\leq C\|h\|_{**}.	
\end{equation}

Let us consider rigorously the unique solution $\psi=\bar{\psi}+\Pi(\psi)$ that satisfies equations (\ref{barpsi}) and (\ref{ortho}). We want to show that
\begin{equation*}
	\psi=\partial_pT_p[h].
\end{equation*}
Let $p^t=p+te_1$, and for a function $f(p)$, we denote
\begin{equation*}
D_p^t=\frac{f(p^t)-f(p)}{t}	,
\end{equation*}
we also set
\begin{equation*}
\varphi^t=T_{p^t}[h], \, D_p^tT_p(h):=\psi^t=\bar{\psi}^t+\Pi[\psi^t]	
\end{equation*}

so that
\begin{equation*}
(-\Delta)^s\bar{\psi}^t-f'(U)\bar{\psi}^t=\bar{h}^t+\bar{d}^tf'(U)Z	
\end{equation*}
where
\begin{equation*}
\bar{h}^t=D_p^t(f'(U))\varphi+c\, D_p^t(f'(U) Z)-[(-\Delta)^s-f'(U)]\Pi(\psi^t), 	
\end{equation*}

\begin{equation*}
	\bar{d}^t=D_p^t c,\mbox{ and }
\Pi(\psi^t)=\alpha^tf'(U)Z	
\end{equation*}
where the coefficient $\alpha^t$ is determined by the relation
\begin{equation*}
\int_{\R^2}\Pi(\psi^t)f'(U)Z\, +\varphi \, D_p^t(f'(U)Z)\, dx=0.
\end{equation*}

Comparing these relations with those defined for $\psi$ (\ref{alpha1}), (\ref{alpha2}) and (\ref{barpsi}), we obtain that
\begin{equation*}
\lim_{t\to 0}\|\psi^t-\psi\|_*=0	
\end{equation*}
which tells us that $\psi=\partial_p T_p(h)$. The continuous dependence in $p$ is clear from the data in the definition of $\psi$. Estimate (\ref{estimateofdifferentiation1}) follows from (\ref{estimateofpsi}).

\end{proof}

}

\section{Proof of Propositions~\ref{existenceofnonlinearproblem} and \ref{reduedproblem}}
\label{sect-nl}

\begin{proof}[Proof of Proposition~\ref{existenceofnonlinearproblem}]

Let $T_d$ be the operator obtained in Proposition \ref{existenceoflinearproblem0} that to $h$ with $h$ with $\|h\|_{**}<\infty$ and satisfying the symmetries \eqref{symmetry2} associates $\phi = T_d[h]$ solution to \eqref{linearproblem0}
with the estimate
\begin{align*}
\| T_d[h] \|_* \leq C_1  \|h\|_{**}.
\end{align*}

Let $X = \{ \, \phi \in L^\infty(\R^2) \, | \, \|\phi\|_*<\infty, \ \phi\text{ satisfies \eqref{symmetry2}} \,\} $ be endowed with $\| \ \| _*$.
Let
\begin{align*}
\mathcal A :X \to X , \quad
\mathcal{A}[\varphi]
= T_d[ -E+N[\phi] ] .
\end{align*}
Then \eqref{nonlinear} is equivalent to solve the fixed point problem
\[
\phi = \mathcal{A}[\phi] ,
\]
which we set-up in the closed ball
\begin{equation*}
\mathcal{B}=\{ \, \phi\in X \, | \, \|\phi\|_* \leq r_0 \, \},
\end{equation*}	
where $r_0>0$ is to be determined later.

From  \eqref{error} we see that
\begin{align*}
\|E\|_{**} \leq C \varepsilon^{3-2s}.
\end{align*}

We claim that if for $\phi \in \mathcal B$
\begin{align}
\label{estN1}
|N[\phi]| \leq C |\phi|^{\min(\gamma,2)}
\chi_{B_{R_0}(b_1')\cup B_{R_0}(b_2')} ,
\end{align}
where $R_0$ is a large fixed constant.
Indeed,  $N[\phi]$, defined in \eqref{def-N}, can be written as
\[
N[\phi] = N_1[\phi] + N_2[\phi]
\]
where
\begin{align*}
N_1[\phi]=
\chi_{B_{\delta/(\varepsilon\mu)}(b_1')}
\Big[
& \big( v_0 + \phi
+ c \varepsilon^{3-2s} \mu^{\frac{2s}{\gamma-1}+1} y_1
- \mu^{\frac{2s}{\gamma-1}} \lambda \big)_+^\gamma
\\
& \qquad
- \big( v_0
+ c \varepsilon^{3-2s} \mu^{\frac{2s}{\gamma-1}+1} y_1
- \mu^{\frac{2s}{\gamma-1}} \lambda \big)_+^\gamma
\\
& \qquad
- \gamma\big( v_0
+ c \varepsilon^{3-2s} \mu^{\frac{2s}{\gamma-1}+1} y_1
- \mu^{\frac{2s}{\gamma-1}} \lambda \big)_+^{\gamma-1} \phi
\Big] ,
\end{align*}
with an analogous formula for $N_2[\phi]$.
Using the definition of $v_0$ \eqref{def-v0} and the choice of $\lambda$ \eqref{lambda2} we see that
\begin{align*}
N_1[\phi]&=
\chi_{B_{\delta/(\varepsilon\mu)}(b_1')}
\Big[
( W(y-b_1') -1 + \mathcal R + \phi )_+^\gamma
-( W(y-b_1') -1 + \mathcal R  )_+^\gamma
\\
& \qquad
-\gamma( W(y-b_1') -1 + \mathcal R  )_+^{\gamma-1}\phi\Big],
\end{align*}
where $\mathcal R = O(\varepsilon^{3-2s}|y-b_1'|)$.
We deduce from here that
\[
|N_1[\phi]| \leq C |\phi|^{\min(\gamma,2)}
\]
and also that the support of $N_1[\phi] $ is contained in the ball $B_{R_0}(b_1')$ for some $R_0$ large fixed (assuming $\varepsilon>0$ is small). We have similar estimates for $N_2[\phi]$ and we deduce \eqref{estN1}.
From \eqref{estN1} we get
\[
\| N[\phi] \|_{**} \leq C_2 \|\phi\|_*^{\min(\gamma,2)}.
\]

So for $\phi \in \mathcal B$ we have
\begin{align*}
\|\mathcal A [\phi]\|_*
\leq C_1( \|E\|_{**} + \|N[\phi]\|_{**} )
\leq C_1 C \varepsilon^{3-2s} + C_1  C_2 r_0^{\min(\gamma,2)} .
\end{align*}
We choose $r_0>0$ small so that $C_1  C_2 r_0^{\min(\gamma,2)} \leq \frac{1}{2}r_0$. Then we work with $\varepsilon>0$ small so that $ C_1 C \varepsilon^{3-2s} \leq \frac{1}{2}r_0$.
This shows that $\mathcal A $ maps $\mathcal B$ into itself.

Also from the expression of $N$, we have
\begin{align*}
|N(\phi_1)-N(\phi_2)|
\leq C\big( |\phi_1|^{\min(\gamma-1,1)}
+|\phi_2|^{\min(\gamma-1,1)}
 \big) |\phi_1-\phi_2|	\chi_{B_{R_0}(b_1')\cup B_{R_0}(b_2')}
\end{align*}
where $R_0$ is a large fixed constant.
This implies that  for $\phi_1,\, \phi_2\in \mathcal{B}$,
\begin{equation*}
\|N(\phi_1)-N(\phi_2)\|_{**}\leq C
r_0^{\min(\gamma-1,1)}  \|\phi_1-\phi_2\|_*.
\end{equation*}	
Hence
\begin{equation*}
\|\mathcal{A}(\phi_1)-\mathcal{A}(\phi_2)\|_*\leq C_1 (\|N(\phi_1)-N(\phi_2)\|_{**})\leq
C C_1 r_0^{\min(\gamma-1,1)}  \|\phi_1-\phi_2\|_* .
\end{equation*}	
If $r_0$ is small we obtain that $\mathcal{A}$ is a contraction mapping from $\mathcal B$ into itslef and then problem (\ref{nonlinear}) admits a unique solution $\phi_d \in \mathcal B$.

From the proof above and the estimate for $E$, one has
\begin{equation*}
\|\phi_d\|_*\leq C\|E\|_{**}\leq C \ve^{3-2s} .
\end{equation*}	
Since $E$ and $V$ in \eqref{nonlinear} depend continuously on $d$, the fixed point characterization of $\phi_d$ shows that it is continuous with respect to $d$.

\hide{

We denote the solution by $\varphi=\psi(p)$.
Next we consider the differentiability of $\varphi$ as function of $p$. Let
\begin{equation*}
M(\varphi,p)	:=\varphi-T_p(E_2+N(\varphi)).
\end{equation*}
Let $\varphi_0=\psi(p_0)$, then $M(\varphi_0,p_0)=0$. On the other hand,
\begin{equation*}
\partial_{\varphi} M(\varphi, p)[\psi]=\psi-T_p(N'(\varphi)\psi	
\end{equation*}
where
\begin{equation*}
N'(\varphi)=f'(U_1+\varphi-\ve x_1)-f'(U).	
\end{equation*}
So
\begin{equation*}
\|N'(\varphi)\psi\|_{**}\leq C((\rho^{2-2s})^{\min\{\gamma-1,1\}}+
\ve^{\frac{2-2s}{3-2s}} )\|\psi\|_*	.
\end{equation*}

If $|x\pm p|$ is large enough, $D_{\varphi}M(\varphi_0, p_0)$ is an invertible operator with uniform bounded inverse. In addition,
\begin{equation*}
\partial_p M(\varphi,p)=(\partial_p T_p	)(E_2+N(\varphi))+T_p(\partial_p E_2+\partial_p N(\varphi)).
\end{equation*}
The implicit function theorem applies in a neighborhood of $(\varphi_0,p_0)$ to yield existence and uniqueness of a function $\varphi(p_0)=\varphi_0$ near $p_0$ with $M(\varphi(p), p)=0$. Moreover $\varphi(p)$ is $C^1$ in $p$, by uniqueness, we must have $\varphi(p)=\psi(p)$. Finally,
\begin{equation*}
\begin{split}
\partial_p \psi(p)&=-D_p M(\psi(p),p)^{-1}[\partial_p M(\psi(p),p)]\\
&=-D_p M(\psi(p),p)^{-1}\Big[(\partial_p T_p	)(E_2+N(\varphi))+T_p(\partial_p E_2+\partial_p N(\varphi))\Big]\\
	\end{split}
\end{equation*}

and
\begin{equation*}
\begin{split}
\partial_p N	&=\Big[f'(U_1+\varphi-\ve x_1)-f'(U_1-c\ve x_1)-f''(U)\varphi \Big]\partial_p U\\
&+[f'(U_1+\varphi-\ve x_1)-f'(U_1-\ve x_1)]\partial_p\varphi^1.
\end{split}
\end{equation*}
Thus one has from (\ref{varphi1}),
\begin{equation*}
\|\partial_p N(\varphi)\|_{**}\leq C( \ve^{\frac{2-2s}{3-2s}} +\|\partial_p \varphi^1\|_*)\|\varphi\|_*	\leq  o(\ve^{\frac{4-4s}{3-2s}}).
\end{equation*}
So one has
\begin{equation*}
	\|\partial_p \psi(p)\|_*\leq C(\|E_2\|_{**}+\|\partial_p E_2\|_{**}+o(\ve^{\frac{4-4s}{3-2s}})).
\end{equation*}
}
\end{proof}

\begin{proof}[Proof of Proposition~\ref{reduedproblem}]
We let $\phi$ denote the solution of \eqref{nonlinear} obtained in proposition~\ref{existenceofnonlinearproblem} and $v  = v_0 +\phi$.
Since
\[
(-\Delta)^s v - f(y,v) = L[\phi] + E - N[\phi]
\]
we have
\begin{align*}
\int_{\R^2}[(-\Delta)^s v_d -f(y,v_d)]\, Z\, dy
= \int_{\R^2} L[\phi]Z \, dy+ \int_{\R^2} E Z \, dy
+\int_{\R^2} N[\phi]Z\, dy.
\end{align*}

Let us consider the term $\int_{\R^2} E Z \, dy $.
From \eqref{error0} and the choice of $\lambda$ \eqref{lambda2} we have
\begin{align*}
E = E_1 +  E_2
\end{align*}
where
\begin{align*}
E_1(y)
&= \chi_{B_{R_0}(b_1')} \Big[ (W(y-b_1')-1)_+^\gamma
\\
& \qquad
- \Big( W(y-b_1')-1
+ W(b_1'-b_2')- W(y-b_2')
+ c \varepsilon^{3-2s}\mu^{\frac{2s}{\gamma-1}+1} (y_1-\frac{d}{\varepsilon\mu})
\Big)_+^\gamma \Big]
\end{align*}
and
\begin{align*}
E_2(y_1,y_2)
&= - E_1(y_1,y_2).
\end{align*}
Directly we have
\begin{align}
\label{Ej}
|E_j| \leq C \varepsilon^{3-2s}  \chi_{B_{R_0}(b_j')}.
\end{align}
Writing
\begin{align*}
\int_{\R^2} E Z \, dy
&=
\int_{\R^2} E_1 Z_1 \, dy
+\int_{\R^2} E_1 Z_2 \, dy
+\int_{\R^2} E_2 Z_1 \, dy
+\int_{\R^2} E_2 Z_2 \, dy
\end{align*}
we see that
\begin{align*}
\int_{\R^2} E_1 Z_1 \, dy
=\int_{\R^2}E_2 Z_2 \, dy,
\end{align*}
and
\begin{align*}
\int_{\R^2} E_1 Z_2 \, dy =
O( \varepsilon^{2(3-2s)}), \quad
\int_{\R^2} E_2 Z_1 \, dy =
O( \varepsilon^{2(3-2s)}),
\end{align*}
where we have used \eqref{Ej} and \eqref{asymptotic1}.
Therefore we need only to compute $ \int_{\R^2} E_1 Z_1 \, dy $.
For $y \in B_{R_0}(b_1')$ we have
\begin{align*}
E_1(y) &= -\gamma (W(y-b_1')-1)_+^{\gamma-1}
\Big[
W(b_1'-b_2')- W(y-b_2')
+ c \varepsilon^{3-2s}\mu^{\frac{2s}{\gamma-1}+1} (y_1-\frac{d}{\varepsilon\mu})
\Big]
\\
&\quad
+O(\varepsilon^{(3-2s)(\gamma-1)}) .
\end{align*}
So, integrating by parts
\begin{align*}
\int_{\R^2} E_1 Z_1 \, dy
&=
\int_{\R^2}
(W(y-b_1')-1)_+^\gamma
\left[
-\partial_{y_1} W(y-b_2')
+ c \varepsilon^{3-2s}
\mu^{\frac{2s}{\gamma-1}+1}
\right]\,dy
\end{align*}
and using the expansion \eqref{asymptotic}
\begin{align*}
\int_{\R^2}& E_1 Z_1  \, dy
\\
& =
\int_{\R^2}
(W(y-b_1')-1)_+^\gamma
\Big[
(2-2s) M_\gamma c_{2,s} \Big(\frac{\varepsilon\mu}{2d}\Big)^{3-2s}
+ c\varepsilon^{3-2s}
\mu^{\frac{2s}{\gamma-1}+1} +O(\varepsilon^{4-2s})
\Big]\,dy
\\
& = c_0
\varepsilon^{3-2s}
\Big[ \frac{1}{d^{3-2s}} + \frac{c}{m} 4 \pi \frac{\Gamma(s)}{\Gamma(2-s)}
+ O(\varepsilon)
\Big] ,
\end{align*}
for some constant $c_0\not=0$, where we have used
\eqref{masses}.
Therefore
\begin{align}
\label{projectionE}
\int_{\R^2} E Z \, d y =  2 c_0
\varepsilon^{3-2s}
\Big[ \frac{1}{d^{3-2s}} + \frac{c}{m} 4 \pi \frac{\Gamma(s)}{\Gamma(2-s)}
+ O(\varepsilon)
\Big] .
\end{align}

\medskip

Nest we consider $\int_{\R^2} N[\phi]Z\, dy$.
Using that $\|\phi\|_*\leq C \varepsilon^{3-2s}$
and \eqref{estN1}
we get
\begin{align}
\nonumber
\left|
\int_{\R^2} N[\phi]Z\, dy
\right|
&=
\left|
\int_{\R^2}
\big(  f(y,v_0+\phi) - f(y,v_0) - f_v(y,v_0)\phi
\big) Z\, dy
\right|
\\
\nonumber
& \leq
\int_{B_{R_0}(b_1')\cup B_{R_0}(b_2')}
|\phi|^{\min(\gamma,2)} Z\, dy
\\
\label{projectionofn}
& \leq C \varepsilon^{(3-2s)\min(\gamma,2) }.
\end{align}

Next, for the integral involving $L[\phi]$, we have
\begin{align*}
\int_{\R^2} L[\phi]Z\,dy
&= \int_{\R^2} \phi L[Z]\,dy
\\
&=
\int_{\R^2}\phi[\gamma(W(y-b_1')-1)^{\gamma-1}_+Z_1+\gamma (W(y-b_2')-1)_+^{\gamma-1}Z_2-f_v(y,v_0)Z]dy .
\end{align*}
But
\begin{align*}
& \gamma(W(y-b_1')-1)^{\gamma-1}_+Z_1+\gamma (W(y-b_2')-1)_+^{\gamma-1}Z_2-f_v(y,v_0)Z
= A_1 + A_2
\end{align*}
where
\begin{align*}
A_1 &=
\gamma(W(y-b_1')-1)^{\gamma-1}_+Z_1
- \gamma ( W(y-b_1')-1 + \mathcal R_1)_+^\gamma \chi_{B_{R_0}(b_1')}( Z_1+Z_2)
\\
A_1 &=
\gamma(W(y-b_2')-1)^{\gamma-1}_+Z_2
- \gamma ( W(y-b_2')-1 + \mathcal R_1)_+^\gamma \chi_{B_{R_0}(b_2')}( Z_1+Z_2)
\end{align*}
and $\mathcal R_1 = O(\varepsilon^{3-2s} |y-b_1'|)$ and $\mathcal R_2 = O(\varepsilon^{3-2s} |y-b_2'|)$.
Using the decay of $W'$ \eqref{asymptotic1} we find that
\begin{align*}
|A_1|\leq C \varepsilon^{(3-2s)\min(\gamma-1,1)}
\chi_{B_{R_0}(b_1')} , \quad
|A_2|\leq C  \varepsilon^{(3-2s)\min(\gamma-1,1)}
\chi_{B_{R_0}(b_2')} ,
\end{align*}
for a possible larger $R_0$.
Using that $\|\phi\|_*\leq C \varepsilon^{3-2s}$ we find that
\begin{align}
\label{projectionLphi}
\left|
\int_{\R^2} L[\phi]Z\,dy
\right| \leq C\varepsilon^{(3-2s)\min(\gamma,2) }.
\end{align}

Putting together \eqref{projectionE}, \eqref{projectionofn} and \eqref{projectionLphi} we obtain the desired conclusion.

\end{proof}

\section{On Theorems~\ref{thm-rot}--\ref{thm-rot2}}
\label{sect-extensions}

The proofs of the remaining results follow similar lines as those  above, so we only present a sketch of the necessary changes.

\medskip
Concerning Theorem \ref{thm-multiple-trav}, let us first  formally  derive the balancing conditions (\ref{bal}). As described in section~\ref{sect-scheme}, we
consider the elliptic problem
\begin{align}
\label{eq-trav-multiple}
\left\{
\begin{aligned}
(-\Delta)^s \psi &=
\varepsilon^{(2-2s)\gamma-2}
\sum_{j=1}^k ( \psi + c x_1  - \varepsilon^{2s-2} \lambda_j^+)_+^\gamma \chi_{B_\delta(p_j)}
\\
& \quad -
\varepsilon^{(2-2s)\gamma-2}
\sum_{l=1}^k ( -\psi - c x_1  - \varepsilon^{2s-2} \lambda_l^-)_+^\gamma \chi_{B_\delta(q_l)}
\quad \text{in }\R^2,
\\
\psi(x) & \to 0 \quad \text{as }|x|\to \infty,
\end{aligned}
\right.
\end{align}
and  look a solution that at main order is approximated by
\begin{align*}
\psi_0(x) =
\varepsilon^{2s-2}\mu^{-\frac{2s}{\gamma-1}}
\Big[
\sum_{j=1}^k
W\Big( \frac{x-p_j}{\varepsilon\mu}\Big)
-
\sum_{l=1}^k
W\Big( \frac{x-q_l}{\varepsilon\mu}\Big)\Big],
\end{align*}
where, as in \eqref{mj}, $\mu>0$ is such that $M_\gamma \mu^{2(1-\frac{s\gamma}{\gamma-1})}=1$, and $\lambda_j^+$, $\lambda_l^-$ are as in \eqref{eq-lambda}, \eqref{eq-lambda2}.
With these choices, the error of approximation, defined by
\begin{align*}
E & = (-\Delta)^s \psi
-\varepsilon^{(2-2s)\gamma-2}
\sum_{j=1}^k ( \psi + c x_1  - \varepsilon^{2s-2} \lambda_j^+)_+^\gamma \chi_{B_\delta(p_j)}
\\
& \quad +\varepsilon^{(2-2s)\gamma-2}
\sum_{l=1}^k ( -\psi - c x_1  - \varepsilon^{2s-2} \lambda_l^-)_+^\gamma \chi_{B_\delta(q_l)}
\end{align*}
has the form, for $x$ near $p_i$:
\begin{align*}
E &=
\varepsilon^{-2} \mu^{-\frac{2s\gamma}{\gamma-1}}
\Big[
\Big( W\Big(\frac{x-p_i}{\varepsilon \mu}\Big) -1 \Big)_+^\gamma
\\
\nonumber
& \qquad
-
\Big(
W\Big(\frac{x-p_i}{\varepsilon \mu}\Big)
+  \sum_{j\not=i}
\Big( W\Big(\frac{x-p_j}{\varepsilon \mu}\Big)
-W\Big(\frac{p_i-p_j}{\varepsilon \mu}\Big)\Big)
\\
& \qquad
- \sum_{l=1}^k
\Big( W\Big(\frac{x-q_l}{\varepsilon \mu}\Big)
-W\Big(\frac{p_i-q_l}{\varepsilon \mu}\Big)\Big)
\\
& \qquad
+ c \mu^{\frac{2s}{\gamma-1}} \varepsilon^{2-2s} (x_1 -p_{i,1})
\Big)_+^\gamma
\Big] .
\end{align*}
Changing $x = \varepsilon \mu y$ and expanding in  $\varepsilon$ gives
\begin{align*}
&   \sum_{j\not=i}
\Big( W\Big(\frac{x-p_j}{\varepsilon \mu}\Big)
-W\Big(\frac{p_i-p_j}{\varepsilon \mu}\Big)\Big)
- \sum_{l=1}^k
\Big( W\Big(\frac{x-q_l}{\varepsilon \mu}\Big)
-W\Big(\frac{p_i-q_l}{\varepsilon \mu}\Big)\Big)
\\
& \qquad
+ c \mu^{\frac{2s}{\gamma-1}} \varepsilon^{2-2s} (x_1 -p_{i,1})
\\
& \sim
c
\varepsilon^{3-2s}
\left\{
- \sum_{j \not = i} \frac{ (p_i-p_j)\cdot y}{ |p_i-p_j|^{4-2s}} + \sum_{l=1}^k \frac{(p_i-q_l)\cdot y}{|p_i-q_l|^{4-2s}} + \frac{c}{m} \frac{2^{2s-1}\pi \Gamma(s)}{\Gamma(2-s)}  y\cdot e_1
 \right\} + O(\varepsilon^{4-2s})
 .
\end{align*}
We want that the first order expansion vanishes, which leads to the equation
\begin{align*}
\sum_{j \not = i} \frac{ p_i-p_j}{ |p_i-p_j|^{4-2s}} - \sum_{l=1}^k \frac{p_i-q_l}{|p_i-q_l|^{4-2s}} =  c \frac{2^{2s-1}\pi \Gamma(s)}{\Gamma(2-s)} e_1,
\end{align*}
for any $i=1,\ldots,k$.

A similar computation for $x$ near $q_m$
leads to
\begin{align*}
\sum_{ l\not =m  } \frac{q_m-q_l}{ |q_m-q_l|^{4-2s}} -\sum_{j=1}^k \frac{q_m-p_j}{ |q_m-p_j|^{4-2s}} & =- c \frac{2^{2s-1}\pi \Gamma(s)}{\Gamma(2-s)} e_1 .
\end{align*}

\hide{
\color{red}

At the point $p_i$, we let $ x=p_i +y$. Then the first error can be computed as follows
$$  E=- (-\Delta)^s [\sum_{j=1}^k \phi (x- p_j) - \sum_{l=1}^k \phi (x-q_l)]  + f(\sum_{j=1}^k \phi (x- p_j) - \sum_{l=1}^k \phi (x-q_l) - \ve x_1) $$
$$= f(\sum_{j=1}^k \phi (x- p_j) - \sum_{l=1}^k \phi (x-q_l) - \ve x_1) - \sum_{j=1}^k f(\phi (x-p_j)) +\sum_{l=1}^k f( \phi (x-q_l) ) $$

At the point $p_i$, we let $ x=p_i +y$. Then for $|y| $ bounded, by the asymptotics in (\ref{asymptotic}), we have
$$ \sum_{j=1}^k \phi (x- p_j) - \sum_{l=1}^k \phi (x-q_l) -\ve x_1
$$
$$ \sim  \phi (y) + \sum_{j \not = i} \frac{M c_{2,s}}{ |p_i-p_j+y|^{2-2s}}-\sum_{l=1}^k \frac{M c_{2,s}}{ | p_i-q_l +y|^{2-2s}} - \ve (p_i+ y)$$
The translating mode in the above error must be balanced. This gives
$$ \sum_{j \not = i} \frac{ p_i-p_j}{ |p_i-p_j|^{4-2s}} - \sum_{l} \frac{p_i-q_l}{|p_i-q_l|^{4-2s}}= \mu$$
where $ \mu =-\frac{1}{ 2(1-s) M c_{2,s}}$. Similarly at a negative vortex point $q_m$, we obtain
$$ \sum_{j } \frac{ q_m-p_j}{ |q_m-p_j|^{4-2s}} - \sum_{l \not = m} \frac{q_l-q_m}{|q_l-q_m|^{4-2s}}= -\mu$$
This gives necessary conditions for the existence of multiple vortices.
}

To prove that if  $({\bf p}, {\bf q})$ is a nondegenerate symmetric array of traveling vortices there exists a solution of \eqref{eq-trav-multiple} close to $\psi_0$, we work in the symmetry class
\begin{equation}\label{symmetry1}
\Psi(x_1,x_2)=-\Psi(-x_1,x_2)=\Psi(x_1,-x_2).
\end{equation}
Following the same proof as in that of Theorem \ref{thm-vortex-pair} and  utilizing  the non-degeneracy conditions under the symmetry condition (\ref{symmetry1}), which is guaranteed by the symmetry condition (\ref{sym1}) on $({\bf p}, {\bf q})$, we obtain Theorem \ref{thm-multiple-trav}.


\medskip
The proof of Theorem~\ref{thm-trav-general} is similar.
In that case final the adjustment of the points
${\bf b} = (b_1,\ldots, b_j)$
as a small perturbation of a given ${\bf b}_0$, critical point of $ I (\bf b)$  non-degenerate up to vertical translations,
obeys
an equation of the form
$$
\nabla_{\bf b}  I (\bf b) +  \mathcal N (\bf b) = 0
$$
where $ \mathcal N (\bf b)$ is an $\ve$- small term, which is invariant under vertical translations. A standard degree argument involving a local orthogonal decomposition of $\bf b$ yields the desired result.

\medskip
\medskip
In the case of the rotating solutions as  in
 Theorems~\ref{thm-rot} and \ref{thm-rot2}, we need to find a family of solutions  $ \Theta_\ve(x)$ to the equation
\begin{align*}
(\nabla^\perp \Psi + \alpha x^\perp )\cdot \nabla \Theta = 0 , \quad \Psi = (-\Delta)^{-s} \Theta,
\end{align*}
such that
$$
 \Theta_\ve(x) \rightharpoonup \sum_{j=1}^k m_j\delta (x-b_j^0) ,
$$
for given intensities $m_j$ and a solution ${\bf b}^0= (b_1^0,\ldots, b_k^0)$ of system \eqref{reduced-rot}.
To achieve this we consider the  elliptic problem
\begin{align*}
\label{eqrot}
\left\{
\begin{aligned}
(-\Delta)^s \psi &=
\varepsilon^{(2-2s)\gamma-2}
\sum_{j=1}^k \sigma_j \Big( \sigma_j \Big( \psi + \alpha \frac{|x|^2}{2} \Big) - \varepsilon^{2s-2} \lambda_j\Big)_+^\gamma \chi_{B_\delta(b_j)}
\quad \text{in }\R^2,
\\
\psi(x) & \to 0 \quad \text{as }|x|\to \infty,
\end{aligned}
\right.
\end{align*}
where $1<\gamma<\frac{2+2s}{2-2s}$,
$\gamma \not=\frac{1}{1-s}$,
$\sigma_j= +1$  if  $m_j>0$ and $=-1$ if $m_j<0$. The choice of $\lambda_j$ is done similarly as in the case of the traveling solutions and we have $\lambda_j =  \mu_j^{-\frac{2s}{\gamma-1}} + O(\varepsilon^{2-2s})$.
The points  $b_1,\ldots,b_k$ are close to $b_1^0,\ldots,b_k^0$, and  $\delta>0$ is fixed so that the balls $B_\delta(b_j)$ are disjoint.

The ansatz $\psi_0$ is the same as in \eqref{general-ansatz} with $\mu$ as in \eqref{mj}.
The proof of Theorem~\ref{thm-rot} is then a direct adaptation of the proof of Theorem~\ref{thm-vortex-pair}.

 Theorem \ref{thm-rot2} similarly follows after a reduction to a problem of the form
$$
\nabla_{\bf b}  J (\bf b) +  \mathcal N (\bf b) = 0
$$
where now $\mathcal N (\bf b)$ is a small $\ve$-perturbation which is invariant under rotations.

\medskip\noindent{\bf Acknowledgments:}
W. Ao is partially supported by NSF of China. J.~D\'avila has been supported  by  a Royal Society  Wolfson Fellowship, UK and  Fondecyt grant 1170224, Chile.
 M.~del Pino has been supported by a Royal Society Research Professorship, UK.
 M. Musso has been supported by EPSRC research Grant EP/T008458/1.
 The  research  of J.~Wei is partially supported by NSERC of Canada.

\end{document}